\documentclass{article}
\usepackage{amsmath,amssymb,amsthm,amsfonts}
\usepackage{hyperref}
\usepackage{algorithm,algorithmic}
\usepackage{tikz-cd}
\usepackage{verbatim}

\newcommand{\Trace}{\operatorname{Trace}}
\newcommand{\Norm}{\operatorname{Norm}}
\newcommand{\Diff}{\operatorname{Diff}}
\newcommand{\Disc}{\operatorname{Disc}}
\newcommand{\Hom}{\operatorname{Hom}}
\newcommand{\End}{\operatorname{End}}
\newcommand{\Gal}{\operatorname{Gal}}
\newcommand{\Res}{\operatorname{Res}}
\newcommand{\Pic}{\operatorname{Pic}}
\newcommand{\Jac}{\operatorname{Jac}}
\newcommand{\goodenough}{ideal}

\newcommand{\CC}{\mathbb{C}}

\newcommand{\QQ}{\mathbb{Q}}
\newcommand{\ZZ}{\mathbb{Z}}
\newcommand{\FF}{\mathbb{F}}

\newcommand{\sO}{\mathcal{O}}

\theoremstyle{plain}
\newtheorem{thm}{Theorem}
\newtheorem{theorem}[thm]{Theorem}
\newtheorem{proposition}[thm]{Proposition}
\newtheorem{lemma}[thm]{Lemma}
\newtheorem{corollary}[thm]{Corollary}

\theoremstyle{remark}
\newtheorem{remark}[thm]{Remark}

\theoremstyle{definition}
\newtheorem{definition}[thm]{Definition}
\newtheorem{example}[thm]{Example}

\title{Products and Polarizations of Super-Isolated Abelian Varieties}

\author{Stefano Marseglia, Travis Scholl}

\AtEndDocument{\bigskip{\footnotesize%
  S.~Marseglia, \textsc{Mathematical Institute, Utrecht University, Utrecht, The Netherlands} \par
  \textit{E-mail}: \texttt{{s.marseglia@uu.nl}} \par\ \par
  T.~Scholl, \textsc{University of California, Irvine} \par
  \textit{E-mail}: \texttt{{traviswscholl@gmail.com}}
}}

% MSC 14K15, 14G15, 11G25, 11G10 (on arXiv)

\begin{document}

\maketitle

\begin{abstract}
In this paper we study super-isolated abelian varieties, that is, abelian varieties over finite fields whose isogeny class contains a single isomorphism class.
  The goal of this paper is to (1) characterize whether a product of super-isolated varieties is super-isolated, and (2) characterize which super-isolated abelian varieties admit principal polarizations, and how many up to polarized isomorphisms.
\end{abstract}

\section{Introduction}

An abelian variety $A/\FF_q$ is \emph{super-isolated} if its $\FF_q$-isogeny class contains only the isomorphism class of $A$.
Hence a super-isolated $A/\FF_q$ is determined up to isomorphism by the characteristic polynomial of its Frobenius endomorphism.
In \cite{scholl2018super}, the second author introduced super-isolated elliptic curves and surfaces in the context of cryptography, and in \cite{scholl2019super} this was generalized to higher dimensional simple super-isolated abelian varieties.
In this paper we continue on this path.  In particular we focus on two questions:
\begin{enumerate}
  \item \label{q1} When is the product of super-isolated varieties super-isolated?
  \item \label{q2} When does a super-isolated variety admit a principal polarization and, if that is the case, how many are there up to polarized isomorphism?
\end{enumerate}
In \cite{scholl2018super} it is shown that simple super-isolated abelian varieties are rare,
in the sense that the for most finite fields $\FF_q$ there are no super-isolated varieties over $\FF_q$.
Trying to generalize this statement to non-simple abelian varieties leads to the first question.
Our methods show that super-isolated products are even rarer and can be enumerated as efficiently as super-isolated simple varieties.

The second question is related to the problem of determining whether an abelian variety is the Jacobian of a curve.
This is in general a difficult question, see \cite{maisner2002abelian} for the case of abelian surfaces.
In the case of super-isolated abelian varieties all the arithmetic information is encoded in the Weil polynomial determining the isogeny class.
Since every Jacobian admits a canonical principal polarization, the second question is a first step towards characterizing super-isolated Jacobians.

In this paper, we first present some general results about products of super-isolated abelian varieties, see Section \ref{sec:glueing_exp}.
Then, in order to give answers to our questions, we focus on a class of abelian varieties that we call \emph{\goodenough{}}, see Definition \ref{def:good-enough}.
For the category of such abelian varieties, we have functorial descriptions in terms of finitely generated free $\ZZ$-modules with a ``Frobenius-like" endomorphism, see \cite{deligne1969varieties,centeleghe2015categories}.
In the ordinary case, we can describe also dual varieties and polarizations, see \cite{howe1995principally}.
We exploit these descriptions in Theorem \ref{thm:product-siav} where we exhibit a criterion to answer Question \ref{q1}.
It turns out that the technology developed to prove Theorem \ref{thm:product-siav} in Sections~\ref{sec:weil-gen-products} and \ref{sec:siav-products} allows us to show that, for any fixed dimension, there are only finitely many super-isolated \goodenough{} abelian varieties that are not simple, see Corollary~\ref{cor:finite-products-siav}.
In Section~\ref{sec:enum_siav} we give an algorithm to enumerate them and we produce complete lists of such abelian varieties that are a product of elliptic curves and surfaces, see Algorithm~\ref{alg:find-wg-2} and Table~\ref{tab:siav-data}.

In Section~\ref{sec:siav-pol}, we characterize which simple ordinary super-isolated varieties admit a principal polarization, see Theorem~\ref{thm:siav-pp-criteria}.
We also show that if such a polarization exists, then it is unique, see Theorem~\ref{thm:unique-pol}.
Moreover in Corollary \ref{cor:pol_square_free} and Remark \ref{remark:pol_power} we discuss the product case.
These results give an answer to Question \ref{q2}.

In Section \ref{sec:jacobians} we apply the theory developed in the previous sections to prove some properties of super-isolated Jacobians, see Proposition \ref{prop:jac}.

In this paper, all morphisms between abelian varieties over a field $k$ are defined over the same field $k$.

\subsection*{Acknowledgements}
The first author was partially supported by the Max Planck Society (Postdoctoral Fellowship) and by the Dutch Research Council (NWO grant 613.001.651).
The second author was partially supported by Alfred P. Sloan Foundation (grant number G-2014-13575).
The second author would also like to thank Alice Silverberg and Karl Rubin for helpful discussions on abelian varieties and class field theory.
The authors thank Jonas Bergstr\"om, Valentijn Karemaker, Christophe Ritzenthaler and Shahed Sharif for comments, and Everett Howe helpful discussion that lead to the results contained in Section \ref{sec:glueing_exp}.
The authors are grateful to the anonymous reviewer for helpful suggestions and comments.

\section{Products of Weil generators}\label{sec:weil-gen-products}

Weil generators in CM fields have been studied in \cite{scholl2018super} and \cite{scholl2019super}.
They represent the Frobenius endomorphism of a super-isolated abelian variety.
The purpose of this section is to generalize the notion of a Weil generator to a product of CM fields and give quantitative results.
For a CM field $K$ we will denote its CM involution by $\bar{\cdot}$.

\begin{definition}
  Let $K$ be a product of CM fields $K = K_1 \times \cdots \times K_n$.
  We say $\alpha \in K$ is a \emph{Weil generator for $K$} if $\alpha\overline{\alpha} \in \ZZ$ and $\sO_K = \ZZ[\alpha,\overline{\alpha}]$, where $\sO_K = \prod \sO_{K_i}$.
  That is, $\alpha\overline{\alpha}$ lies in the image of the diagonal embedding $\ZZ \to K$, and the subring generated by $\alpha$ and $\overline{\alpha}$ is the integral closure of $\ZZ$ in $K$.
\end{definition}

Let $K$ be a CM-field. Denote by $F$ the fixed field of the CM involution of $K$.
Fix $\gamma \in K$ satisfying $\sO_K = \sO_{F}[\gamma]$.
Let $R$ be the set of all $\eta \in F$ such that $\sO_F = \ZZ[\eta]$.
Choose a set $T \subset R$ of representatives of $R/\sim$, where $\eta_1 \sim \eta_2$ if $\eta_1 - \eta_2 \in \ZZ$.
If $\gamma$ does not exist or $T = \emptyset$, then $K$ has no Weil generators.
Indeed, if $\alpha$ is a Weil generator for $K$ then we could choose $\gamma = \alpha$ and $(\alpha+\overline{\alpha}) \in R$, see \cite[Lemma~3.13]{scholl2019super}.
Given the order $\sO_F$, the set $T$ is always finite and can be effectively computed, see \cite{gyory1976}.
By \cite[Lemma~3.15]{scholl2019super} every Weil generator $\alpha \in K$ can be written as
\begin{equation}\label{eq:alpha_u_gamma_eta_a}
\alpha = \frac{u(\gamma - \overline{\gamma}) + \eta + a}{2}
\end{equation}
for a unique triple $(u,\eta,a) \in \sO_F^\times \times T \times \ZZ$.

\begin{example}
  Let $K = \QQ(i)$ and put $\gamma = i$ and $T = \{0\}$.
  Then every Weil generator is of the form $(\pm 2i + a)/2$ for some integer $a \in \ZZ$.
  Alternatively, every Weil generator can also be written as $b \pm i$ for some $b \in \ZZ$.
\end{example}

\begin{example}
  Let $K = \QQ(\zeta_5)$ and put $\gamma = \zeta_5$ and $T = \{(1+\sqrt{5})/2,(1-\sqrt{5})/2\}$.
  Then every generator can be expressed as $(u(\zeta_5 - \bar\zeta_5) + (1 \pm \sqrt{5})/2 + a)/2$ for some $u \in \sO_{\QQ(\sqrt{5})}^\times$ and $a \in \ZZ$.
\end{example}

\begin{lemma}\label{lem:weil-gen-to-curve}
  Let $K$ be a CM field with maximal real subfield $F$, and $\alpha \in K$ a Weil generator of $K$.
  Fix $\eta \in T$ and $a\in \ZZ$ as in Equation~\eqref{eq:alpha_u_gamma_eta_a}.
  If $q = \alpha\overline{\alpha}$, then
  \[
    \Norm_{F/\QQ} \left( \left(\eta + a\right)^2 - 4q \right) = \frac{\Disc_{K/\QQ}}{\Disc_{F/\QQ}^2}.
  \]
\end{lemma}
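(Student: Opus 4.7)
The plan is to identify the minimal polynomial of $\alpha$ over $F$, show that $\sO_K = \sO_F[\alpha]$, compute the relative discriminant $\Disc(\sO_K/\sO_F)$, and finish via the tower formula for discriminants.

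First, from Equation~\eqref{eq:alpha_u_gamma_eta_a} and the fact that $u \in \sO_F^\times$ and $\eta, a$ are fixed by the CM involution, I would compute directly that $\alpha + \overline{\alpha} = \eta + a$ and $\alpha\overline{\alpha} = q$. Hence $\alpha$ satisfies the quadratic $h(X) = X^2 - (\eta + a)X + q$ over $F$, and since $\alpha \notin F$ (the element $u(\gamma - \overline{\gamma})/2$ is nonzero because $\gamma$ generates $K$ over $F$), this is the minimal polynomial of $\alpha$ over $F$.

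Next I would verify that $\sO_K = \sO_F[\alpha]$. By choice of $\eta \in T$ we have $\sO_F = \ZZ[\eta]$, and the relation $\eta = (\alpha + \overline{\alpha}) - a$ places $\eta$, and hence $\sO_F$, inside $\ZZ[\alpha, \overline{\alpha}] = \sO_K$. Conversely, $\overline{\alpha} = (\eta + a) - \alpha \in \sO_F[\alpha]$, so $\sO_K = \ZZ[\alpha, \overline{\alpha}] \subseteq \sO_F[\alpha] \subseteq \sO_K$. From this monogenic presentation the relative discriminant is the familiar $\Disc(\sO_K/\sO_F) = (h'(\alpha)) = ((\alpha - \overline{\alpha})^2) = ((\eta+a)^2 - 4q)$ as an ideal of $\sO_F$, generated by the computed element.

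Finally I would invoke the tower formula
\[
\Disc_{K/\QQ} = \Norm_{F/\QQ}\bigl(\Disc(\sO_K/\sO_F)\bigr) \cdot \Disc_{F/\QQ}^{[K:F]},
\]
apply it with $[K:F] = 2$ and $\Disc(\sO_K/\sO_F) = ((\eta+a)^2 - 4q)$, and solve for the desired norm. Since the left side and $\Disc_{F/\QQ}^2$ are unambiguous rational integers, the norm is determined up to sign; positivity follows because $(\eta + a)^2 - 4q = (\alpha - \overline{\alpha})^2$ is a totally nonpositive element of $F$ (a purely imaginary square), so $\Norm_{F/\QQ}$ of it has sign $(-1)^{[F:\QQ]}$, matching the sign of $\Disc_{K/\QQ}/\Disc_{F/\QQ}^2$ (which equals the relative discriminant's norm up to the standard sign convention).

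The only real obstacle is keeping the signs and the monogenic reduction $\sO_K = \sO_F[\alpha]$ straight; the rest is the standard tower identity. Once these two steps are in place, the formula drops out immediately.
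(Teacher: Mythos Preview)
Your proposal is correct and follows essentially the same route as the paper: both arguments rest on $\sO_K=\sO_F[\alpha]$, the identification $\Disc_{K/F}(\alpha)=(\alpha-\overline{\alpha})^2=(\eta+a)^2-4q$, and the discriminant tower formula. The only cosmetic difference is that the paper invokes the explicit basis identity $\Disc_{K/\QQ}(1,\dots,\beta^{g-1},\alpha,\dots,\alpha\beta^{g-1})=\Disc_{F/\QQ}(\beta)^2\,\Norm_{F/\QQ}\Disc_{K/F}(\alpha)$ (with $\beta=\alpha+\overline{\alpha}$), which yields the integer equality including sign in one stroke, whereas you use the ideal-theoretic tower formula and then verify the sign separately via $(-1)^g$; your sign check is fine since $\Disc_{K/\QQ}$ has sign $(-1)^g$ for a CM field of degree $2g$ and $(\alpha-\overline{\alpha})^2$ is totally negative in $F$.
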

\begin{proof}
  Let $g = \deg F$ and $\beta = \alpha+\overline{\alpha}$.
  By \cite[Ch.2~Ex.~23]{marcus2018number},
  \begin{align*}
    \Disc_{K/\QQ}(1,\dots,\beta^{g-1},\alpha,\dots,\alpha\beta^{g-1})
    =
    \Disc_{F/\QQ}(\beta)^2 \Norm_{F/\QQ}\Disc_{K/F}(\alpha).
  \end{align*}
  But $\{1,\alpha\}$ is a $\sO_{F}$-basis for $\sO_K$ and $\{1,\beta,\dots,\beta^{g-1}\}$ is a $\ZZ$-basis for $\sO_{F}$,
  so this reduces to $\Disc_{K} = \Disc_{F}^2 \Norm_{F/\QQ}\Disc_{K/F}(\alpha)$.
  The result follows because
  \[
    \Disc_{K/F}(\alpha)
    = \det
    \begin{pmatrix}
    \Trace_{K/F}(1) & \Trace_{K/F}(\alpha) \\
    \Trace_{K/F}(\alpha) & \Trace_{K/F}(\alpha^2)
    \end{pmatrix}
    = (\alpha - \overline{\alpha})^2
    = (\eta + a)^2 - 4q.
  \]
\end{proof}

The expression in Lemma~\ref{lem:weil-gen-to-curve} can be viewed as the equation of a plane curve by replacing $a$ and $4q$ with formal variables.
Lemma~\ref{lem:curve-is-irreducible} below together with the substitutions
\[
  \Norm_{F/\QQ} \left( \left(\eta + a\right)^2 - 4q \right)
  =
  \prod_{\sigma:F \to \CC} \left( \left(a + \sigma(\eta)\right)^2 - 4q \right)
\]
and $t = -\Disc_{K}/\Disc_{F}^2$
shows that this curve is geometrically irreducible.

\begin{lemma}\label{lem:curve-is-irreducible}
  Let $t,a_1,\dots,a_n \in \CC$ and
  \[
    P(x,y) = \prod_{i=1}^n \left( \left(x - a_i\right)^2 - y \right) + t.
  \]
  If the $a_i$ are distinct and $t \neq 0$, then $P(x,y)$ is irreducible.
\end{lemma}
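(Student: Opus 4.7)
My plan is to view $P(x,y)$ as a polynomial in $y$ over $\CC[x]$ and suppose, toward a contradiction, that $P=fg$ with neither $f$ nor $g$ a unit in $\CC[x,y]$. Expanding gives $P = (-1)^n y^n + (\text{lower order in }y) + t$, so the leading coefficient in $y$ is a unit in $\CC[x]$; consequently any factor of $y$-degree zero would have to divide $(-1)^n$ and hence be a unit itself. Thus both $f$ and $g$ have positive $y$-degree, and in particular $\deg_y f \leq n-1$.

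The key computational input is the substitution $y=(x-a_i)^2$, which annihilates the $i$-th factor of the product and leaves $P(x,(x-a_i)^2) = t$ for every $i$. Since $t \neq 0$, the identity $f(x,(x-a_i)^2)\cdot g(x,(x-a_i)^2) = t$ in $\CC[x]$ forces both factors to be nonzero scalars; write $c_i := f(x,(x-a_i)^2) \in \CC^\times$. Because the $a_i$ are distinct, any two of the parabolas $y=(x-a_i)^2$ meet at the single point $\bigl((a_i+a_j)/2,\ ((a_i-a_j)/2)^2\bigr)$, and evaluating $f$ there gives $c_i = c_j$. Hence all the $c_i$ share one common value $c \in \CC^\times$.

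The argument now closes quickly. The polynomial $f(x,y)-c$ vanishes on every parabola $y=(x-a_j)^2$, so each $y-(x-a_j)^2$ (irreducible because monic and linear in $y$) divides $f-c$ in $\CC[x,y]$. For distinct $a_j$ these divisors are pairwise non-associate, hence pairwise coprime, so their product $\prod_j (y-(x-a_j)^2)$, of $y$-degree $n$, divides $f-c$. But $\deg_y(f-c) \leq n-1$, forcing $f-c = 0$ and contradicting the assumption that $f$ is not a unit.

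The only real obstacle I anticipate is spotting the substitution $y=(x-a_i)^2$ together with the ``intersection of parabolas'' trick that equates the $c_i$; once these are in hand, the remainder is elementary polynomial algebra and no monodromy or analytic input is required. An alternative route would go through the branched covering $z \mapsto \prod_j((x-a_j)^2 - z)$ and its monodromy, but the direct approach above is both shorter and avoids characteristic-zero-specific machinery.
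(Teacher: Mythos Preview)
Your proof is correct and follows essentially the same approach as the paper: both use the substitution $y=(x-a_i)^2$ to see that a factor restricts to a constant $c_i$ along each parabola, use the pairwise intersection point $\bigl((a_i+a_j)/2,\ ((a_i-a_j)/2)^2\bigr)$ to force all $c_i$ equal, and then conclude that the product $\prod_j\bigl(y-(x-a_j)^2\bigr)$ divides $f-c$, yielding a degree contradiction. The only cosmetic difference is that you track the $y$-degree while the paper tracks total degree.
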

\begin{proof}
  Suppose that $P(x,y) = h(x,y)g(x,y)$ and that $h(x,y)$ is non-constant.
  Let $L_i(x,y) = (x - a_i)^2 - y$ so that $P(x,y) = \prod_{i=1}^n L_i(x,y) + t$.
  For any point $Q$ with $L_i(Q) = 0$, we have $P(Q) = t$.
  Hence $h(x,(x-a_i)^2)$ must be constant, say $c_i$.
  Because $L_i$ and $L_j$ both vanish at $((a_i+a_j)/2,((a_i-a_j)/2)^2)$, it follows that $c_1 = c_2 = \cdots = c_n$.
  Let $c$ denote this value.
  Since $h(x,y)-c$ is zero whenever $L_i(x,y)$ is, it follows that $L_i(x,y)$ divides $h(x,y)-c$ for every $i=1,\dots,n$.
  As the $L_i(x,y)$ are distinct and irreducible, we must have that their product also divides $h(x,y) - c$.
  But $h(x,y)$ is non-constant, so $h(x,y)-c \neq 0$ and therefore $\deg h(x,y) = \deg P(x,y)$.
  It follows that $g(x,y)$ is constant.
\end{proof}

\begin{proposition}\label{prop:reduce-to-monogenic}
  Let $K = K_1 \times \cdots \times K_n$ be a product of CM fields.
  For $i=1,\ldots,n$ pick an algebraic integer $\alpha_i \in \sO_{K_i}$ and put $\alpha = (\alpha_1,\dots,\alpha_n)$.
  For each $i$ denote by $g_i(x) \in \ZZ[x]$ the minimal polynomial of $\alpha_i + \overline{\alpha}_i$.
  Then $\ZZ[\alpha,\overline{\alpha}] = \prod \ZZ[\alpha_i,\overline{\alpha}_i]$ if and only if $|\Res(g_i,g_j)| = 1$ for all $i \neq j$.
\end{proposition}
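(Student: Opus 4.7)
The plan is to reduce the claim to an analogous statement about the ``real subrings'' $\ZZ[\beta] \subseteq \prod_i \ZZ[\beta_i]$, with $\beta = \alpha + \overline\alpha = (\beta_1, \dots, \beta_n)$, and then to identify when this inclusion is an equality by a Chinese Remainder / resultant computation. First, observe that $\alpha$ satisfies the quadratic $X^2 - \beta X + \alpha\overline\alpha = 0$ over the real part, and in the Weil-generator framework of this section $\alpha\overline\alpha$ lies in the diagonal $\ZZ$, so both $R := \ZZ[\alpha,\overline\alpha]$ and each $S_i := \ZZ[\alpha_i,\overline\alpha_i]$ split as free rank-$2$ modules over their real subrings,
\[
R \;=\; \ZZ[\beta] \oplus \ZZ[\beta]\,\alpha, \qquad S_i \;=\; \ZZ[\beta_i] \oplus \ZZ[\beta_i]\,\alpha_i.
\]
Consequently $\prod_i S_i = \prod_i\ZZ[\beta_i] \oplus \bigl(\prod_i\ZZ[\beta_i]\bigr)\alpha$, and the inclusion $R \hookrightarrow \prod_i S_i$ respects the direct-sum splitting. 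Comparing the ``real'' summands then yields $R = \prod_i S_i \iff \ZZ[\beta] = \prod_i\ZZ[\beta_i]$.

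Next, I would realise the real subrings as quotients: $\ZZ[\beta_i] \simeq \ZZ[x]/(g_i)$ and $\ZZ[\beta] \simeq \ZZ[x]/(\operatorname{lcm}_i g_i)$. If any two $g_i$'s coincide then $\operatorname{lcm}_i g_i$ has strictly smaller degree than $\prod_i g_i$, the $\ZZ$-ranks differ, equality fails, and correspondingly the offending $|\Res(g_i,g_j)|=0$. If the $g_i$ are pairwise distinct they are pairwise coprime over $\QQ$, so $\operatorname{lcm}_i g_i = \prod_i g_i$, and the Chinese Remainder exact sequence
\[
0 \longrightarrow \ZZ[x]\big/\textstyle\prod_i g_i \longrightarrow \prod_i \ZZ[x]/(g_i) \longrightarrow Q \longrightarrow 0
\]
has finite cokernel of order $\prod_{i<j}|\Res(g_i,g_j)|$. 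I would prove this by induction on $n$, starting from the classical two-factor identity $\#\ZZ[x]/(g_i, g_j) = |\Res(g_i, g_j)|$ and using the multiplicativity $\Res\bigl(\prod_{i<n} g_i,\,g_n\bigr) = \prod_{i<n}\Res(g_i, g_n)$ in the inductive step. Combining both reductions, $R = \prod_i S_i$ if and only if every $|\Res(g_i, g_j)| = 1$.

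The main obstacle is the first step: the clean direct-sum decomposition rests crucially on $\alpha\overline\alpha$ lying in $\ZZ[\beta]$, which is automatic only when $\alpha\overline\alpha$ is a rational integer (the Weil-generator case). In broader generality the $\sigma$-invariant part of $R$ would contain $\ZZ[\beta, \alpha\overline\alpha]$, potentially strictly larger than $\ZZ[\beta]$, breaking the direct correspondence between the real-summand equality and the resultants $\Res(g_i, g_j)$ and forcing a more delicate analysis. The subsequent CRT / resultant calculation in the second step is otherwise routine, hinging only on careful bookkeeping of cokernel orders as pairwise resultants are added one at a time in the induction.
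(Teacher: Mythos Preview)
Your two-step plan mirrors the paper's proof. Step~2 is essentially identical: the paper writes $\ZZ[\beta]\cong\ZZ[x]/\operatorname{lcm}_i(g_i)$, reduces $\ZZ[\beta]=\prod_i\ZZ[\beta_i]$ to pairwise comaximality of the ideals $g_i\ZZ[x]$, and cites \cite[Lem.~11.3]{serres1976vaserstein} for the equivalence with $|\Res(g_i,g_j)|=1$, where you instead compute the CRT cokernel order by hand. For Step~1 the paper takes a slightly different route: rather than your rank-$2$ module splitting $R=\ZZ[\beta]\oplus\ZZ[\beta]\alpha$, it argues with the orthogonal idempotents $e_i\in K$, observing that $\ZZ[\alpha,\overline\alpha]=\prod_i\ZZ[\alpha_i,\overline\alpha_i]$ iff each $e_i\in\ZZ[\alpha,\overline\alpha]$, and that since $e_i=\overline{e_i}$ this is equivalent to $e_i\in\ZZ[\alpha+\overline\alpha]$ once one knows that $\ZZ[\alpha+\overline\alpha]$ is the full conjugation-fixed subring of $\ZZ[\alpha,\overline\alpha]$.

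Your caveat about Step~1 is well taken, and in fact it applies equally to the paper's version: in general the conjugation-fixed subring is $\ZZ[\alpha+\overline\alpha,\,\alpha\overline\alpha]$, which may strictly contain $\ZZ[\alpha+\overline\alpha]$. For instance, with $K_1=\QQ(i)$, $K_2=\QQ(\sqrt{-2})$, $\alpha_1=1+i$, $\alpha_2=1+\sqrt{-2}$ one has $\beta_1=\beta_2=2$, so $g_1=g_2=x-2$ and $\Res(g_1,g_2)=0$; yet $\alpha\overline\alpha=(2,3)$ already yields the idempotents, and one checks $\ZZ[\alpha,\overline\alpha]=\ZZ[i]\times\ZZ[\sqrt{-2}]=\prod_i\ZZ[\alpha_i,\overline\alpha_i]$. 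So both your decomposition and the paper's fixed-subring assertion tacitly need $\alpha\overline\alpha\in\ZZ[\beta]$ --- automatic when $\alpha\overline\alpha\in\ZZ$, which is precisely the Weil-generator situation and the only case invoked downstream. Under that hypothesis your argument is complete and equivalent to the paper's; the idempotent phrasing is marginally slicker but not materially different.
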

\begin{proof}
  First we will show that $\ZZ[\alpha,\overline{\alpha}] = \prod \ZZ[\alpha_i,\overline{\alpha}_i]$ if and only if $\ZZ[\alpha + \overline{\alpha}] = \prod \ZZ[\alpha_i + \overline{\alpha}_i]$.
  Let $e_i$ be the $i$-th orthogonal idempotent of $K=\prod K_i$.
  Observe that $\ZZ[\alpha + \overline{\alpha}]$ is the subset of $\ZZ[\alpha,\overline{\alpha}]$ that is fixed by complex conjugation.
  Moreover, for every $i=1,\ldots,n$, we have $e_i=\bar e_i$.
  The claim follows from the following chain of equivalences: we have $\ZZ[\alpha,\overline{\alpha}] = \prod \ZZ[\alpha_i,\overline{\alpha}_i]$ if and only if
   $e_i \in \ZZ[\alpha,\overline{\alpha}]$ for all $i=1,\ldots,n$ if and only if
   $e_i \in \ZZ[\alpha + \overline{\alpha}]$ for all $i=1,\ldots,n$ if and only if $\ZZ[\alpha + \overline{\alpha}] = \prod \ZZ[\alpha_i + \overline{\alpha}_i]$.

  Next we claim that $\ZZ[\alpha + \overline{\alpha}] = \prod \ZZ[\alpha_i + \overline{\alpha}_i]$ if and only if $|\Res(g_i,g_j)| = 1$ for all $i \neq j$.
  The map $\ZZ[x] \to \ZZ[\alpha + \overline{\alpha}]$ sending $x$ to $\alpha + \overline{\alpha}$ is surjective.
  Therefore, it is equivalent to show that $\ZZ[x]/\mathrm{lcm}(g_i) \cong \prod \ZZ[x]/g_i$ if and only if $|\Res(g_i,g_j)| = 1$.
  The former holds if and only if the ideals $g_i\ZZ[x]$ are coprime.
  By \cite[Lem.~11.3]{serres1976vaserstein} (see also \cite[p.~420]{resultants1983myerson} for a slightly stronger version), this holds if and only if the pairwise resultants of the $g_i$ are units.
\end{proof}

\begin{corollary}\label{cor:products-of-wg}
  Let $K = K_1 \times \cdots \times K_n$ be a product of CM fields.
  Let $\alpha = (\alpha_1,\dots,\alpha_n) \in K$, and $g_i$ be the minimal polynomial for $\alpha_i + \overline{\alpha}_i$.
  Then $\alpha$ is a Weil generator for $K$ if and only if each $\alpha_i$ is a Weil generator for $K_i$, $\alpha_1\overline{\alpha}_1 = \cdots = \alpha_n\overline{\alpha}_n$, and $|\Res(g_i,g_j)| = 1$ for all $i \neq j$.
\end{corollary}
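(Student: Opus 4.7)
The plan is to reduce to the single-field Weil generator notion by analyzing each of the two defining conditions ($\alpha\overline{\alpha}\in\ZZ$ and $\sO_K = \ZZ[\alpha,\overline{\alpha}]$) componentwise, and then invoke Proposition~\ref{prop:reduce-to-monogenic} to handle the resultant condition.

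For the norm condition, since $\alpha\overline{\alpha} = (\alpha_1\overline{\alpha}_1,\dots,\alpha_n\overline{\alpha}_n)$ under $K = \prod K_i$, asking $\alpha\overline{\alpha}$ to lie in the image of the diagonal $\ZZ \hookrightarrow K$ amounts to the existence of a single $q \in \ZZ$ with $\alpha_i\overline{\alpha}_i = q$ for every $i$. This statement packages the componentwise requirements ``$\alpha_i\overline{\alpha}_i\in\ZZ$" (part of each $\alpha_i$ being a Weil generator of $K_i$) together with the equality $\alpha_1\overline{\alpha}_1=\cdots=\alpha_n\overline{\alpha}_n$ appearing in the statement.

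For the ring-generation condition I would exploit the chain
\[
  \ZZ[\alpha,\overline{\alpha}] \;\subseteq\; \prod_{i=1}^n \ZZ[\alpha_i,\overline{\alpha}_i] \;\subseteq\; \prod_{i=1}^n \sO_{K_i} \;=\; \sO_K,
\]
valid because each $\alpha_i\in\sO_{K_i}$ (obtained from $\alpha\in\sO_K$ in the forward direction, and from the Weil generator hypothesis on each $\alpha_i$ in the backward direction). Equality of the outer terms is equivalent to simultaneous equality at each link: the left equality is precisely ``$|\Res(g_i,g_j)|=1$ for all $i\neq j$" by Proposition~\ref{prop:reduce-to-monogenic}, while the right equality is ``$\ZZ[\alpha_i,\overline{\alpha}_i] = \sO_{K_i}$" in each factor, which combined with $\alpha_i\overline{\alpha}_i\in\ZZ$ from the previous step is exactly the definition of $\alpha_i$ being a Weil generator of $K_i$.

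Both implications then follow by running this decomposition forwards or backwards, so I do not expect any real obstacle---the content is essentially bookkeeping on top of Proposition~\ref{prop:reduce-to-monogenic}. The only mild subtlety worth attention is the correct interpretation of ``$\alpha\overline{\alpha}\in\ZZ$" for a product ring (via the diagonal embedding rather than just componentwise), and the fact that the chain of inclusions collapses to a chain of equalities only when both the resultant conditions and the single-factor Weil generator conditions hold simultaneously.
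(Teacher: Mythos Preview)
Your proposal is correct and follows essentially the same approach as the paper's proof: both directions hinge on Proposition~\ref{prop:reduce-to-monogenic}, the diagonal interpretation of $\alpha\overline{\alpha}\in\ZZ$, and the observation that $\ZZ[\alpha,\overline{\alpha}]=\sO_K$ forces (and is forced by) $\ZZ[\alpha_i,\overline{\alpha}_i]=\sO_{K_i}$ together with the resultant condition. Your chain-of-inclusions presentation is a slightly tidier packaging of the same bookkeeping the paper does in two separate paragraphs.
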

\begin{proof}
  Suppose that $\alpha$ is a Weil generator for $K$
  Then $\alpha_1\overline{\alpha}_1 = \cdots = \alpha_n\overline{\alpha}_n \in~\ZZ$.
  Moreoever, $\sO_K = \ZZ[\alpha,\overline{\alpha}]$.
  Because $\sO_K = \prod \sO_{K_i}$, it follows that $\sO_{K_i} = \ZZ[\alpha_i,\overline{\alpha}_i]$ for all $i$.
  So the resultant condition in the statement follows from Proposition~\ref{prop:reduce-to-monogenic}.

  Suppose that the latter conditions hold.
  By Proposition~\ref{prop:reduce-to-monogenic}, we have $\ZZ[\alpha,\overline{\alpha}] = \prod \ZZ[\alpha_i,\overline{\alpha}_i] = \prod \sO_{K_i} = \sO_K$.
  Since $\alpha_i\overline{\alpha}_i$ is the same for each $i$, we also have that $\alpha\overline{\alpha}$ lies in the diagonal $\ZZ \to K$.
  Therefore $\alpha$ is a Weil generator for $K$.
\end{proof}

\begin{example}\label{ex:product-of-weil-generators-is-not-enough}
  In Corollary~\ref{cor:products-of-wg}, both the condition that $|\Res(g_i,g_j)| = 1$ for all $i \neq j$, and the condition that $\alpha_1\overline{\alpha}_1=\cdots =\alpha_n\overline{\alpha}_n$ are necessary.
  To see this, let $K_1 = \QQ(\sqrt{-11})$ and $K_2 = \QQ(\sqrt{-19})$.
  Then $\alpha_1 = (3 - \sqrt{-11})/2$ and $\alpha_2 = (1 - \sqrt{-19})/2$ are Weil generators with norm $5$ for $K_1$ and $K_2$ respectively.
  However, $|\Res(g_1,g_2)| = |\Res(x - 3, x - 1)| = 2$.
  Therefore the ring $\ZZ[(\alpha_1,\alpha_2),(\overline{\alpha}_1,\overline{\alpha}_2)] \neq \sO_{K_1 \times K_2}$, so $(\alpha_1,\alpha_2)$ is not a Weil generator for $K_1 \times K_2$.

  If we instead set $K_2 = \QQ(\sqrt{-13})$ and $\alpha_2 = 2 - i\sqrt{13}$ then $|\Res(g_1,g_2)| = |\Res(x - 3, x - 4)| = 1$ but $\Norm(\alpha_2) = 17$.
  So in this case, $(\alpha_1,\alpha_2)$ is not a Weil generator because its norm does not lie in the diagonal embedding $\ZZ \to K_1 \times K_2$.
\end{example}

\begin{lemma}\label{lem:wg-products}
  Let $K_1$ and $K_2$ be CM fields. Then there are finitely many Weil generators in $K_1 \times K_2$.
\end{lemma}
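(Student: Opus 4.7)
My plan is to combine the parameterization \eqref{eq:alpha_u_gamma_eta_a} with the criterion of Corollary~\ref{cor:products-of-wg}, reducing the question to intersecting two plane curves. I would write $\alpha_i = (u_i(\gamma_i - \overline{\gamma}_i) + \eta_i + a_i)/2$ with $(u_i,\eta_i,a_i) \in \sO_{F_i}^\times \times T_i \times \ZZ$, so that $\alpha_i + \overline{\alpha}_i = \eta_i + a_i$ has minimal polynomial $g_i(x) = \prod_{\sigma}(x - \sigma(\eta_i) - a_i) \in \ZZ[x]$ of degree $g_i = [F_i:\QQ] \geq 1$. Since $T_1$ and $T_2$ are finite, I fix a choice of $(\eta_1, \eta_2)$ and argue finiteness in each case.

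Next I would exploit the resultant condition. A direct computation yields
\[
  \Res(g_1, g_2) = \prod_{\sigma,\tau}\bigl(\sigma(\eta_1) - \tau(\eta_2) + a_1 - a_2\bigr) = h(a_1 - a_2),
\]
where $h(x) = \prod_{\sigma,\tau}(x + \sigma(\eta_1) - \tau(\eta_2)) \in \ZZ[x]$ has positive degree $g_1 g_2 \geq 1$, so $|h(c)| = 1$ has only finitely many integer solutions $c = a_1 - a_2$. With $c$ also fixed, Lemma~\ref{lem:weil-gen-to-curve} forces the pair of equations
\[
  \prod_\sigma\!\bigl((a_1 + \sigma(\eta_1))^2 - 4q\bigr) = D_1,\qquad
  \prod_\tau\!\bigl((a_1 - c + \tau(\eta_2))^2 - 4q\bigr) = D_2,
\]
where $D_i = \Disc_{K_i}/\Disc_{F_i}^2 \neq 0$. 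By Lemma~\ref{lem:curve-is-irreducible} (applied with $t = -D_i$) each of these cuts out a geometrically irreducible plane curve in the variables $(a_1, 4q)$. Provided the two curves share no component, their common zeros in $\CC^2$ are finite by Bezout, yielding finitely many admissible $(a_1, q)$; then $u_i$ is recovered up to sign from $u_i^2(\gamma_i - \overline{\gamma}_i)^2 = (\eta_i + a_i)^2 - 4q$, so in total only finitely many Weil generators $(\alpha_1, \alpha_2)$ arise.

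The main obstacle is ruling out that the two defining polynomials are proportional. When $g_1 \neq g_2$ their $a_1$-degrees $2g_i$ differ, so this is immediate; when $g_1 = g_2$ both are monic of the same $a_1$-degree, and proportionality forces equality. Specialising the polynomial identity at $q = 0$ then gives $f_1(a_1)^2 - f_2(a_1)^2 = D_1 - D_2$, where $f_1(x) = \prod_\sigma(x + \sigma(\eta_1))$ and $f_2(x) = \prod_\tau(x - c + \tau(\eta_2))$ are monic of degree $g_1$. Since $f_1 + f_2$ has leading term $2x^{g_1}$ and is thus non-constant, the factorisation $(f_1 - f_2)(f_1 + f_2) = D_1 - D_2$ forces $f_1 = f_2$ and $D_1 = D_2$. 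Hence $\{\sigma(\eta_1)\}_\sigma = \{\tau(\eta_2) - c\}_\tau$ as multisets, which supplies a pair $(\sigma,\tau)$ with $c + \sigma(\eta_1) - \tau(\eta_2) = 0$, so that $h(c) = 0$, contradicting $|h(c)| = 1$. Once the two curves are known to be distinct, the rest of the proof is routine bookkeeping over the finitely many choices of $(\eta_1, \eta_2, c)$.
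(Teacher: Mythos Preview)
Your proof is correct and follows essentially the same route as the paper: parameterize via Equation~\eqref{eq:alpha_u_gamma_eta_a}, use Corollary~\ref{cor:products-of-wg} to pin $a_1-a_2$ to finitely many integer values, and then for each such value intersect the two geometrically irreducible plane curves coming from Lemma~\ref{lem:weil-gen-to-curve} and Lemma~\ref{lem:curve-is-irreducible} via B\'ezout. Your verification that the two curves are distinct---specializing at $q=0$ to force $f_1=f_2$ and hence $h(c)=0$, contradicting $|h(c)|=1$---is in fact a bit cleaner than the paper's, which factors the resultant equation over $\CC$ into linear pieces $x_1-x_2=\beta_k$ and runs a longer substitution argument, but the underlying mechanism is the same.
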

\begin{proof}
  For $i=1,2$, choose $\gamma_i$ and $T_i$ for $K_i$ as in the beginning of Section~\ref{sec:weil-gen-products}.
  If for some $i$, $\gamma_i$ does not exist or $T_i = \emptyset$, then $K_i$ does not have any Weil generators and we are done.
  So we will assume that $\gamma_1,\gamma_2$ exist and both $T_1$ and $T_2$ are nonempty.
  Now we may write any Weil generator $\alpha_i$ in $K_i$ as in Equation~\eqref{eq:alpha_u_gamma_eta_a}, that is, there is a unique triple $(u_i,\eta_i,a_i) \in \sO_{F_i}^\times \times T_i \times \ZZ$ such that
  \[\alpha_i = \frac{u_i(\gamma_i - \overline{\gamma}_i) + \eta_i + a_i}{2}. \]
  For each pair $(\eta_1,\eta_2) \in T_1 \times T_2$, let $X_{\eta_1,\eta_2}$ denote the affine variety cut out by the following equations in $\QQ[x_1,x_2,y]$:
  \begin{align}
    \prod_{\sigma: F_1 \to \CC} \left( (\sigma(\eta_1) + x_1)^2 - 4y \right) &= \frac{\Disc_{K_1}}{\Disc_{F_1}^2}
    \label{eq:111}
    \\
    \prod_{\tau: F_2 \to \CC} \left( (\tau(\eta_2) + x_2)^2 - 4y \right) &= \frac{\Disc_{K_2}}{\Disc_{F_2}^2}
    \label{eq:222}
    \\
    \label{eq:333}
    \prod_{\genfrac{}{}{0pt}{}{\sigma: F_1 \to \CC}{\tau: F_2 \to \CC}} \left((\sigma(\eta_1) + x_1) - (\tau(\eta_2) + x_2)\right)^2 &= 1.
  \end{align}
  By \cite{gyory1976} the sets $T_i$ are finite, so there are a finite number of the $X_{\eta_1,\eta_2}$.
  Let~$\mathcal{X}$ denote their union.

  We will now construct a finite-to-one map $\psi$ from Weil generators of $K_1 \times K_2$ to $\mathcal{X}(\ZZ)$.
  Let $(\alpha_1,\alpha_2)$ be a Weil generator in $K_1 \times K_2$.
  Then $\alpha_i$ corresponds to a unique triple $(u_i,\eta_i,a_i) \in \sO_{F_i}^\times \times T_i \times \ZZ$.
  Let $q = \alpha_1\overline{\alpha}_1 = \alpha_2\overline{\alpha}_2$.
  Define $\psi$ by sending $(\alpha_1,\alpha_2) \mapsto (a_1,a_2,q)$.
  The image of $\psi$ satisfies Equations~\eqref{eq:111} and \eqref{eq:222} by Lemma~\ref{lem:weil-gen-to-curve}, and Equation~\eqref{eq:333} by Corollary~\ref{cor:products-of-wg}.
  Next we will show that $\psi$ is finite-to-one.
  It suffices to show that for a given $a_i$ and $q$ in $\ZZ$, there are finitely many $u_i \in \sO_{F_i}^\times$ and $\eta_i \in T_i$ such that $\alpha_i = (u_i(\gamma_i - \overline{\gamma}_i) + \eta_i + a_i)/2$ is a Weil generator for $K_i$ with $\alpha_i\overline{\alpha}_i = q$.
  Every such Weil generator satisfies $4\alpha_i\overline{\alpha}_i = 4q = u_i^2\Norm_{K_i/F_i}(\gamma_i - \overline{\gamma}_i) + (\eta_i + a_i)^2$.
  In particular, $u_i$ is determined up to sign by $a_i$, $q$, and $\eta_i$.
  Since $T_i$ is finite, there are only finitely many possible $\eta_i$.
  Hence there are only finitely many possible $u_i$ as well.

  It suffices to show that each component $X_{\eta_1,\eta_2}$ has dimension $0$, as this implies that $\mathcal{X}(\ZZ)$ is finite.
  Let $(\eta_1,\eta_2) \in T_1 \times T_2$, and let $X = X_{\eta_1,\eta_2}$.
  Notice that Equation~\eqref{eq:333} is a polynomial equation in $x_1 - x_2$.
  Therefore there are complex numbers $\beta_k$ for $k=1,\ldots,\tilde k,$ with $\tilde k=2[K_1:\QQ][K_2:\QQ]$, such that
  \begin{equation}
  \label{eq:444}
    \left( \prod_{\genfrac{}{}{0pt}{}{\sigma: F_1 \to \CC}{\tau: F_2 \to \CC}} \left(x_1-x_2 + \sigma(\eta_1) - \tau(\eta_2)\right)^2\right) -1 =
    \prod_{k=1}^{\tilde k} (x_1-x_2-\beta_k).
  \end{equation}

  Hence we can write $X$ as a union of subvarieties $X_k$, for $k=1,\ldots,\tilde k$, where $X_k$ is the intersection of the geometrically irreducible surfaces (see Lemma~\ref{lem:curve-is-irreducible}) defined by Equations~\eqref{eq:111} and \eqref{eq:222}  and by
  \begin{equation*}
       x_1-x_2-\beta_k =0.
  \end{equation*}

  In particular we can eliminate $x_2$ and describe $X_k$ as the intersection of the two irreducible plane curves
  \begin{align}
    \label{eq:11}
    \prod_{\sigma: F_1 \to \CC} \left( (\sigma(\eta_1) + x_1)^2 - 4y \right) - \frac{\Disc_{K_1}}{\Disc_{F_1}^2} &= 0,
    \\
    \label{eq:22}
    \prod_{\tau: F_2 \to \CC} \left( (\tau(\eta_2) + x_1 - \beta_k)^2 - 4y \right) - \frac{\Disc_{K_2}}{\Disc_{F_2}^2} &= 0.
  \end{align}
  By B{\'e}zout's theorem, the intersection of two geometrically irreducible plane curves has dimension $0$ as long as the curves are distinct.
  So it is sufficient to show that the polynomials in Equations~\eqref{eq:11} and \eqref{eq:22} are not proportional.
  We will prove this by contradiction.

  Suppose that the two polynomials in Equations~\eqref{eq:11} and \eqref{eq:22} are proportional.
  By comparing the highest degree monomial in $y$ we must have that Equations~\eqref{eq:11} and \eqref{eq:22} are equal. Note that this also implies $[K_1:\QQ] = [K_2:\QQ]$.
  Setting $x_1 = -(\sigma(\eta_1) + \tau(\eta_2) - \beta_k)/2$ and $y = (\sigma(\eta_1) + x_1)^2/4$ for any choice of $\sigma$ and $\tau$ will zero out the products in Equations~\eqref{eq:11} and $\eqref{eq:22}$, which shows that $\Disc_{K_1}/\Disc_{F_1}^2 = \Disc_{K_2}/\Disc_{F_2}^2$.
  Setting $y=0$ leads to
  \[
    \prod_{\sigma: F_1 \to \CC} \left( \sigma(\eta_1) + x_1 \right)^2
    =
    \prod_{\tau: F_2 \to \CC} \left( \tau(\eta_2) + x_1 - \beta_k \right)^2.
  \]
  By comparing the zeros we see that, for each $k$, there is a bijection $\sigma \mapsto \tau_{\sigma,k}$ such that $\sigma(\eta_1) = \tau_{\sigma,k}(\eta_2) - \beta_k$.
  Fix a $1\leq k_0 \leq \tilde k$ substituting $\sigma(\eta_1) = \tau_{\sigma,k_0}(\eta_2) - \beta_{k_0}$ and  $x_2=x_1-\beta_{k_0}$ in Equation~\eqref{eq:444} yields
  \begin{align*}
    \prod_{k=1}^{\tilde k} \left(x_1-(x_1-\beta_{k_0})-\beta_k\right)
    &=
    \left( \prod_{\sigma,\tau} x_1-(x_1-\beta_{k_0}) + \tau_{\sigma,k_0}(\eta_2) - \beta_{k_0} - \tau(\eta_2)\right)^2 - 1
    \\
    0
    &=
    \left( \prod_{\sigma,\tau} \left(\tau_{\sigma,k}(\eta_2) - \tau(\eta_2)\right)^2\right)- 1
  \end{align*}
  Since the product is taken over all embeddings $\sigma$ (and $\tau$) we have that the right hand side is equal to $-1$, which is a contradiction.
\end{proof}

\begin{theorem}\label{thm:finite-weil-gens-in-product}
  Let $K = K_1 \times \cdots \times K_n$ be a product of CM fields. If $n > 1$ then $K$ has finitely many Weil generators.
\end{theorem}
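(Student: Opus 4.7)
The plan is to reduce to the $n=2$ case already handled in Lemma~\ref{lem:wg-products}, using projection onto the first two factors. Specifically, suppose $\alpha = (\alpha_1,\dots,\alpha_n) \in K$ is a Weil generator. By Corollary~\ref{cor:products-of-wg}, each $\alpha_i$ is a Weil generator for $K_i$, all the norms $\alpha_i\overline{\alpha}_i$ are equal to a common integer $q$, and $|\Res(g_i,g_j)|=1$ for all $i \neq j$, where $g_i$ is the minimal polynomial of $\alpha_i+\overline{\alpha}_i$. Applying Corollary~\ref{cor:products-of-wg} in the reverse direction to just the first two factors, the projection $(\alpha_1,\alpha_2)$ is itself a Weil generator for $K_1 \times K_2$.

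By Lemma~\ref{lem:wg-products}, there are only finitely many Weil generators in $K_1 \times K_2$, so only finitely many choices of $(\alpha_1,\alpha_2)$ arise. Each such choice pins down $q = \alpha_1\overline{\alpha}_1 \in \ZZ$. It therefore remains to show that for each fixed $i \geq 3$ and each fixed $q$, only finitely many $\alpha_i \in \sO_{K_i}$ satisfy $\alpha_i \overline{\alpha}_i = q$. This is classical: at every archimedean embedding $K_i \hookrightarrow \CC$ the element $\alpha_i$ has absolute value $\sqrt{q}$, so under the natural embedding $\sO_{K_i} \hookrightarrow K_i \otimes_\QQ \RR$ all such $\alpha_i$ lie in a compact polydisc, and a discrete subgroup intersected with a compact set is finite.

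Combining these observations gives finitely many tuples $(\alpha_1,\dots,\alpha_n)$, and hence finitely many Weil generators in $K$. The only subtlety worth noting is the edge case where some $K_i$ admits no Weil generators at all; then the theorem is trivial, since the set in question is empty. I do not expect any real obstacle here: the content of the result is concentrated in the $n=2$ case, and the extension to $n>2$ is essentially a finiteness statement for CM integers of bounded norm.
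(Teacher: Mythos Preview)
Your proof is correct and shares the paper's core idea: reduce to the $n=2$ case via Corollary~\ref{cor:products-of-wg} and Lemma~\ref{lem:wg-products}. The paper's own proof is even shorter --- it simply observes that a Weil generator for $K$ projects to a Weil generator for \emph{every} sub-product $K_i \times K_j$ with $i \neq j$, so each coordinate $\alpha_i$ is already constrained to a finite set by Lemma~\ref{lem:wg-products} applied to (say) $K_1 \times K_i$. You instead invoke the lemma only once, for $K_1 \times K_2$, to pin down $q$, and then finish with the elementary observation that a lattice meets a compact polydisc in finitely many points. Both routes are valid; yours trades one extra (easy) argument for fewer applications of the lemma, while the paper's version avoids introducing any new ingredient beyond what is already proved.
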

\begin{proof}
  This follows directly from Lemma~\ref{lem:wg-products} and the fact that every Weil generator for $K$ projects to a Weil generator for each product $K_i \times K_j$, with $i\neq j$.
\end{proof}

\section{Products of super-isolated varieties}
\label{sec:siav-products}

In this section, we study products of super-isolated abelian varieties.
Recall that the \emph{Weil polynomial} $h$ of a $d$-dimensional abelian variety $A/\FF_q$ is defined as the characteristic polynomial of the Frobenius endomorphism of $A$.
The polynomial $h$ lies in $\ZZ[x]$, it has degree $2d$ and all its complex roots have absolute value $\sqrt{q}$.
Also we define the \emph{real Weil polynomial} of $A$ the unique polynomial $g$ in $\ZZ[x]$ such that $h(x)=x^{d}g(x+q/x)$. In particular if $h$ is irreducible and $\pi$ is a root of $h$, then $g$ is the minimal polynomial of $\pi+q/\pi$.

\subsection{Glueing exponent and super isolated abelian varieties}
\label{sec:glueing_exp}

\begin{lemma}
\label{lemma:super-iso_from_prod}
Let $A_1$ and $A_2$ be abelian varieties over a finite field $\FF_q$. Assume that $A_1$ and $A_2$ have no isogeny factor in common. If the product $A_1 \times A_2$ is super-isolated, then both $A_1$ and $A_2$ are super-isolated.
\end{lemma}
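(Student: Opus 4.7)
By symmetry, it suffices to show that $A_1$ is super-isolated. The plan is to take an arbitrary abelian variety $B_1$ in the isogeny class of $A_1$ and prove that $B_1 \cong A_1$. The key input is the classical fact that if $X$ and $Y$ are abelian varieties over $\FF_q$ with no isogeny factor in common, then $\Hom(X,Y) = 0$: by Poincar\'e complete reducibility, $\Hom(X,Y) \otimes \QQ$ decomposes into blocks $\Hom(S,S')\otimes \QQ$ over pairs of simple isogeny factors $S$ of $X$ and $S'$ of $Y$, each of which vanishes when $S \not\sim S'$; since $\Hom$-groups between abelian varieties are torsion-free, the integral $\Hom$ vanishes as well.

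\emph{Step 1 (transporting the isomorphism).} Since $B_1$ is isogenous to $A_1$, it has the same simple isogeny factors as $A_1$, and therefore shares none with $A_2$. Combining any isogeny $A_1 \to B_1$ with the identity on $A_2$ shows that $A_1 \times A_2$ is isogenous to $B_1 \times A_2$. Super-isolation of $A_1 \times A_2$ then yields an isomorphism $\phi : A_1 \times A_2 \to B_1 \times A_2$.

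\emph{Step 2 (block decomposition).} Writing $\phi$ in matrix form with respect to the product decompositions gives components $\phi_{11}: A_1 \to B_1$, $\phi_{21}: A_1 \to A_2$, $\phi_{12}: A_2 \to B_1$, and $\phi_{22}: A_2 \to A_2$. Applying the $\Hom$-vanishing recalled above to the pairs $(A_1,A_2)$ and $(A_2,B_1)$ forces $\phi_{21} = 0$ and $\phi_{12} = 0$. Hence $\phi = \phi_{11} \times \phi_{22}$, and since $\phi$ is an isomorphism so are both factors; in particular $\phi_{11} : A_1 \to B_1$ is an isomorphism, as required.

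The conceptual content of the proof is entirely in the block-diagonal argument of Step 2; the main thing to verify carefully is the $\Hom$-vanishing statement, which is the only place the hypothesis ``no isogeny factor in common'' enters. This is standard, so I do not expect a real obstacle beyond citing Poincar\'e reducibility.
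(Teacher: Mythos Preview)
Your proof is correct and follows essentially the same approach as the paper: both use the block decomposition of $\Hom(A_1\times A_2,\,B_1\times B_2)$ together with the vanishing of the off-diagonal blocks (from the ``no common isogeny factor'' hypothesis) to force the isomorphism to be a product $\varphi_1\times\varphi_2$. The only cosmetic difference is that the paper treats both factors at once by taking arbitrary $B_1\sim A_1$ and $B_2\sim A_2$, whereas you handle one factor at a time with $B_2=A_2$ and appeal to symmetry.
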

\begin{proof}
Let $B_1$ and $B_2$ be abelian varieties isogenous to $A_1$ and $A_2$, respectively.
Since $A_1\times A_2$ is super-isolated then there exists an isomorphism $\varphi:A_1\times A_2 \to B_1\times B_2$. Observe that
\[ \Hom_{\FF_q}(A_1 \times A_2 , B_1 \times B_2) = \Hom_{\FF_q}(A_1 , B_1) \times \Hom_{\FF_q}( A_2 , B_2), \]
because $A_1$ and $A_2$ have no isogeny factor in common. Hence there are isomorphisms $\varphi_1:A_1\to B_1$ and $\varphi_2:A_2\to B_2$ such that $\varphi = \varphi_1\times \varphi_2$. In particular $A_1$ and $A_2$ are super-isolated.
\end{proof}

\begin{definition}[{cf.~\cite[Def.~2.1]{HoweLauter12}}]
Let $A_1$ and $A_2$ be abelian varieties over $\FF_q$. The \emph{glueing exponent} $e(A_1,A_2)$ of $A_1$ and $A_2$ is the greatest common divisor of the exponent of $\Delta$, where $\Delta$ ranges over all finite group schemes that embed in both a variety isogenous to $A_1$ and a variety isogenous to $A_2$.
\end{definition}

\begin{lemma}
\label{lemma:super-iso_to_prod}
Let $A_1$ and $A_2$ be abelian varieties over a finite field $\FF_q$.
If $A_1$ and $A_2$ are super-isolated and $e(A_1,A_2)=1$ then the product $A_1 \times A_2$ is super-isolated.
\end{lemma}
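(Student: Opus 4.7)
The plan is to take an arbitrary abelian variety $B$ isogenous to $A_1 \times A_2$ and exhibit an isomorphism $B \cong A_1 \times A_2$. Fix an isogeny $\phi : A_1 \times A_2 \to B$ and let $\Delta \subset A_1 \times A_2$ be its kernel, a finite group scheme. The strategy is to analyze $\Delta$ by splitting off the part that sits inside a single factor: set $\Delta_i := \Delta \cap A_i$ where $A_i$ is viewed as the $i$-th factor of $A_1 \times A_2$, so that $\Delta_1 \times \Delta_2 \subseteq \Delta$. The quotient $G := \Delta/(\Delta_1 \times \Delta_2)$ is the object that will drive the argument.

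The key claim is that $G$ embeds, via the two coordinate projections, both into $A_1/\Delta_1$ and into $A_2/\Delta_2$. Indeed, if a class represented by $(a_1,a_2) \in \Delta$ projects to zero in $A_1/\Delta_1$, then $a_1 \in \Delta_1$, hence $(a_1,0) \in \Delta_1 \times \Delta_2 \subseteq \Delta$ and therefore $(0,a_2) = (a_1,a_2) - (a_1,0) \in \Delta$, forcing $a_2 \in \Delta_2$; the other side is symmetric. Since $A_i/\Delta_i$ is isogenous to $A_i$, the finite group scheme $G$ is of the type appearing in the definition of the glueing exponent, so its exponent divides $e(A_1,A_2) = 1$. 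Hence $G$ is trivial, $\Delta = \Delta_1 \times \Delta_2$, and passing to the quotient produces an isomorphism
\[
  B \cong (A_1 \times A_2)/\Delta \cong (A_1/\Delta_1) \times (A_2/\Delta_2).
\]

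Finally, I would invoke the hypothesis that each $A_i$ is super-isolated: since $A_i/\Delta_i$ lies in the $\FF_q$-isogeny class of $A_i$, we must have $A_i/\Delta_i \cong A_i$, and combining these isomorphisms yields $B \cong A_1 \times A_2$, which completes the proof. The main subtlety, in my view, is the identification step: one must verify that the two maps $G \to A_i/\Delta_i$ are genuine closed immersions of finite group schemes rather than mere injections on geometric points. This matters in positive characteristic where $\Delta$ need not be \'etale, but I expect it to amount to a diagram chase in the category of finite commutative group schemes over $\FF_q$ rather than a serious obstacle.
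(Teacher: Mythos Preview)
Your argument is correct and is precisely the content of \cite[Lemma~2.3]{HoweLauter12}, which the paper cites as its entire proof; you have simply unpacked that citation rather than invoking it. Your worry about the closed-immersion step is unfounded: the kernel of $\Delta \to A_1/\Delta_1$ is $\Delta \cap (\Delta_1 \times A_2)$, and your subtraction argument identifies this with $\Delta_1 \times \Delta_2$ on $R$-points for every $\FF_q$-algebra $R$, so the induced map from $G$ is a monomorphism of finite commutative group schemes over a field, hence a closed immersion.
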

\begin{proof}
It is a direct application of \cite[Lemma~2.3]{HoweLauter12}.
\end{proof}

Observe that $A_1$ and $A_2$ have no isogeny factor in common if and only if $e(A_1,A_2)<\infty$. See also \cite[after Def.~2.1]{HoweLauter12}.
In view of this consideration, Lemmas \ref{lemma:super-iso_from_prod} and \ref{lemma:super-iso_to_prod} allow us to understand super-isolated products in terms of the glueing exponent.
In general the glueing exponent is tricky to compute.
Nevertherless one can use \cite[Prop.2.8]{HoweLauter12} to show that
for abelian varieties $A$ and $B$ over $\FF_q$ with no isogeny factor in common
the glueing exponent divides the resultant of the radicals of the real Weil polynomials $g_A$ and $g_B$ of $A$ and $B$.
In particular if such a resultant is $1$ then also $e(A,B)=1$.
In particular Corollary \ref{cor:products-of-wg} can be considered as a reformulation in terms of Weil generators (restricted to \goodenough{} abelian varieties, see Definition~\ref{def:good-enough} below) of Lemma \ref{lemma:super-iso_to_prod}.

In what follows we will use a different strategy:
We will restrict ourselves to a subcategory of abelian varieties and give conditions for them (and their products) to be super-isolated in terms of Weil generators.
The upshot of this approach is that it is more computationally friendly, since Weil generators can be enumerated.
Moreover, in such a subcategory, we will be able to study powers of super-isolated abelian varieties.

\subsection{Weil generators and super isolated abelian varieties}
\label{sec:weil_gens}
From now on we will focus on squarefree varieties.
That is, abelian varieties whose Weil polynomial is squarefree.
Our main tool is the equivalence of categories between certain abelian varieties $A/\FF_q$ and $\ZZ$-modules with extra structure.

\begin{definition}
\label{def:good-enough}
  Let $A/\FF_q$ be an abelian variety, and let $h$ denote its Weil polynomial.
  Let $p$ be the characteristic of $\FF_q$.
  Then $A$ is \emph{\goodenough{}} if the following holds:
  \begin{enumerate}
    \item the polynomial $h$ factors into distinct irreducible factors.
    \item the polynomial $h$ has no real roots.
    \item the polynomial $h$ is ordinary (meaning half of the roots of $h$ are $p$-adic units) or $q$ is prime.
  \end{enumerate}
\end{definition}
Suppose that $A/\FF_q$ is \goodenough{}.
Then there is an isogeny
\[ A \sim A_1\times\ldots\times A_n, \]
where the $A_i$ are simple and pairwise non-isogenous abelian varieties.
By Honda-Tate theory, see \cite{Honda68} and \cite{Tate66}, the Weil polynomial factors  into $h = h_1 \cdots h_n$, where $h_i$ is the Weil polynomial of $A_i$.
Because of the ordinary condition (or the condition that $q$ is prime), the polynomials $h_i$ are irreducible.
In particular, if we let $\pi_i$ denote a root of $h_i$ and put $K_i = \QQ(\pi_i)$, then $K_i$ is a CM field of degree $2\dim A_i$.
Let $K = \prod K_i$ and consider the subring $\ZZ[\pi,\overline{\pi}] \subseteq K$ where $\pi = (\pi_1,\dots,\pi_n)$ and $\overline{\pi} = (\overline{\pi}_1,\dots,\overline{\pi}_n)$ with $\overline{\pi}_i = q/\pi_i$.
We will identify $K$ with $\End A \otimes \QQ$.
Let $\sO_K = \prod \sO_{K_i}$, that is, the integral closure of $\ZZ$ in $K$.
Note that $\sO_K$ and $\ZZ[\pi,\overline{\pi}]$ are free finitely generated $\ZZ$-modules of rank $\dim_\QQ(K)$.
In particular we have a finite-index inclusion $\ZZ[\pi,\overline{\pi}] \subseteq \sO_K$ and
one can show that $\ZZ[\pi,\overline{\pi}] \subseteq \End A \subseteq \sO_K$.

By \cite[Thm.~4.3]{marseglia2021squarefree} the \emph{\goodenough{}} abelian varieties can be functorially described in terms of fractional $\ZZ[\pi,\overline{\pi}]$-ideals.
Such a description builds on the equivalences of categories established in \cite{deligne1969varieties} and \cite{centeleghe2015categories}.

\begin{theorem}\label{thm:siav}
  If $A/\FF_q$ is \goodenough{} and simple, then $A$ is super-isolated if and only if $K$ has class number $1$ and $\ZZ[\pi,\overline{\pi}] = \sO_K$.
  Equivalently, $A$ is super-isolated if and only if $K$ has class number $1$ and $\pi$ is a Weil generator for $K$.
\end{theorem}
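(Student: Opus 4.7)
The plan is to apply the equivalence of categories from \cite[Thm.~4.3]{marseglia2021squarefree} cited just before the theorem. Under this equivalence, the abelian varieties in the isogeny class of the simple ideal $A$, taken up to $\FF_q$-isomorphism, correspond bijectively to the fractional $R$-ideals in $K$ taken up to $R$-module isomorphism, where $R = \ZZ[\pi,\overline{\pi}]$. Two such ideals $I,J$ are $R$-module isomorphic precisely when $J = \lambda I$ for some $\lambda \in K^\times$, and $A$ itself corresponds to the class of $R$. Thus $A$ is super-isolated if and only if $R$ has exactly one isomorphism class of fractional ideals, and the task reduces to translating this into a statement about $R$ and $\sO_K$.

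For the forward direction, I would suppose that $A$ is super-isolated, so there is only one $R$-module isomorphism class of fractional $R$-ideals in $K$. Since $\sO_K$ is itself a fractional $R$-ideal, this forces $\sO_K = \lambda R$ for some $\lambda \in K^\times$. Taking multiplier rings, which are invariant under scaling by $K^\times$, yields $\sO_K = (\sO_K : \sO_K) = (R : R) = R$. Once $R = \sO_K$, the iso classes of fractional $R$-ideals coincide with the ideal class group $\Pic(\sO_K)$, so super-isolatedness forces $h_K = 1$.

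Conversely, if $R = \sO_K$ and $h_K = 1$, then the set of iso classes of fractional $R$-ideals equals $\Pic(\sO_K) = 1$, and $A$ is super-isolated. Finally, the equivalence with the Weil generator formulation is immediate: since $A$ is simple, $K$ is a CM field, and $\pi\overline{\pi} = \pi \cdot (q/\pi) = q \in \ZZ$ automatically; therefore the single remaining condition $\sO_K = \ZZ[\pi,\overline{\pi}]$ is precisely what it means for $\pi$ to be a Weil generator for $K$.

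The only non-formal ingredient is the multiplier ring step in the forward direction, which exploits the fact that distinct orders in $K$ are distinguished by how they act on themselves as fractional ideals. The most delicate point is invoking the equivalence at the level of $\FF_q$-isomorphism classes rather than mere isogeny classes; but this is exactly what \cite[Thm.~4.3]{marseglia2021squarefree} supplies, so the proof reduces to a direct translation once this black box is in place.
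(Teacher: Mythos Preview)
Your argument is correct. The paper's own proof of this statement is simply a citation to the second author's earlier work \cite[Thm.~7.4]{scholl2019super} and \cite[Thm.~16]{scholl2018super}, with no details given. Your direct argument via \cite[Thm.~4.3]{marseglia2021squarefree} is exactly the approach the paper itself uses one theorem later to prove the more general product statement (Theorem~\ref{thm:product-siav}); in that proof the paper asserts without further justification that ``$A$ is super-isolated if and only if $\ZZ[\pi,\overline{\pi}] = \sO_K$ and $\Pic(\sO_K)$ is trivial'' as a consequence of \cite[Thm.~4.3]{marseglia2021squarefree}, and your multiplier-ring argument supplies precisely the missing step behind that assertion. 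So your route is not genuinely different from the paper's methodology---it is the same black box, unpacked a bit more carefully in the simple case.
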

\begin{proof}
  The result for ordinary varieties is given in \cite[Thm.~7.4]{scholl2019super}.
  The extension to the case where $q$ is prime is \cite[Thm.~16]{scholl2018super}.
\end{proof}

\begin{theorem}\label{thm:product-siav}
  If $A/\FF_q$ is \goodenough{}, then $A$ is super-isolated if and only if each $A_i$ is super-isolated and $\ZZ[\pi,\overline{\pi}] = \sO_K$.
  Equivalently, $A$ is super-isolated if and only if $\Pic(\sO_K)$ is trivial and $\pi$ is a Weil generator for $K$.
\end{theorem}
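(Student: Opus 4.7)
My approach will use the equivalence of categories between \goodenough{} abelian varieties in the isogeny class of $A$ and fractional $R$-ideals of $R := \ZZ[\pi,\overline{\pi}]$ from \cite[Thm.~4.3]{marseglia2021squarefree}, under which $A$ itself corresponds to $R$ and isomorphism classes of varieties correspond to $K^\times$-orbits of fractional $R$-ideals in $K$. So $A$ is super-isolated if and only if every fractional $R$-ideal in $K$ has the form $xR$ for some $x \in K^\times$. The plan is to show this forces $R = \sO_K$ and $\Pic(\sO_K) = 1$, and conversely that those two conditions suffice. The equivalence with the second formulation in the statement is then formal, because $\pi$ is a Weil generator for $K$ iff $R = \sO_K$ (the condition $\pi\overline{\pi}\in\ZZ$ is automatic from $\pi_i\overline{\pi}_i = q$), and $\Pic(\sO_K) = \prod_i\Pic(\sO_{K_i}) = 1$ combined with $R = \sO_K$ precisely says each $\pi_i$ is a Weil generator and each $K_i$ has class number one, which by Theorem~\ref{thm:siav} is the condition that each $A_i$ be super-isolated.

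For the forward direction, I will use that $\sO_K = \prod \sO_{K_i}$ is itself a fractional $R$-ideal, since $R \subseteq \sO_K$ and $\sO_K$ is finitely generated as a $\ZZ$-module. If $A$ is super-isolated, then there exists $x \in K^\times$ with $\sO_K = xR$. From $1 \in \sO_K$ one obtains $x^{-1} \in R \subseteq \sO_K$, and $x = x\cdot 1 \in \sO_K$, so $x$ is a unit in $\sO_K$ and therefore $R = x^{-1}\sO_K = \sO_K$. Once $R = \sO_K$ is established, the $K^\times$-orbits of fractional $R$-ideals in $K$ are exactly the elements of $\Pic(\sO_K)$, whose triviality is then forced by super-isolation.

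Conversely, if $R = \sO_K$ and $\Pic(\sO_K) = 1$, then $R$ is a product of Dedekind domains, every fractional $R$-ideal is invertible, and triviality of the Picard group makes every such ideal principal. Through the equivalence of categories this says the isogeny class of $A$ contains a single isomorphism class, so $A$ is super-isolated.

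The main obstacle is the argument in the forward direction that $\sO_K$, as a fractional $R$-ideal, cannot be isomorphic to $R$ unless $R = \sO_K$; without this step one might hope that the class monoid of a non-maximal $R$ could still be trivial, which would break the theorem. Once this point is in hand the rest of the proof is bookkeeping between ideals, class groups, and the Weil-generator framework of Section~\ref{sec:weil-gen-products}.
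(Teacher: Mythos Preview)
Your proof is correct and follows essentially the same approach as the paper: both rest on \cite[Thm.~4.3]{marseglia2021squarefree}, Theorem~\ref{thm:siav}, and the decomposition $\Pic(\sO_K)=\prod_i\Pic(\sO_{K_i})$. The only difference is that you unpack by hand why the ideal-theoretic description forces $R=\sO_K$ and $\Pic(\sO_K)=1$ (via the clean $\sO_K=xR\Rightarrow x\in\sO_K^\times$ argument), whereas the paper simply reads this characterization off from the cited theorem.
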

\begin{proof}
  By \cite[Thm.~4.3]{marseglia2021squarefree} we have that $A$ is super-isolated if and only if
  $\ZZ[\pi,\overline{\pi}] = \sO_K$ and $\Pic(\sO_K)$ is trivial.
  Moreover by Theorem \ref{thm:siav} we have that each simple factor $A_i$ is  super-isolated if and only if $\ZZ[\pi_i,\overline{\pi}_i] = \sO_{K_i}$ and $\Pic(\sO_{K_i})$ is trivial.
  The result follows from the equality $\Pic(\sO_K)=\prod\Pic(\sO_{K_i})$.
\end{proof}

By using Theorem \ref{thm:product-siav} it is easy to construct super-isolated products of big dimension as shown in the following example.
\begin{example}
  Consider the polynomials
  \begin{align*}
    h_1(x) &= (x^4 - 2x^3 + 3x^2 - 4x + 4), \\
    h_2(x) &= (x^6 - 4x^5 + 9x^4 - 15x^3 + 18x^2 - 16x + 8), \\
    h_3(x) &= (x^6 - 3x^5 + 6x^4 - 9x^3 + 12x^2 - 12x + 8), \\
    h_4(x) &= (x^8 - 5x^7 + 12x^6 - 20x^5 + 29x^4 - 40x^3 + 48x^2 - 40x + 16), \\
    h_5(x) &= (x^8 - 5x^7 + 13x^6 - 25x^5 + 39x^4 - 50x^3 + 52x^2 - 40x + 16), \\
    h_6(x) &= (x^8 - 4x^7 + 5x^6 + 2x^5 - 11x^4 + 4x^3 + 20x^2 - 32x + 16).
  \end{align*}
  One can check that each factor $h_i$ determines a Weil generator $\alpha_i$ for a CM field $K_i=\QQ[x]/h_i$, and that $(\alpha_1,\ldots,\alpha_6)$ is a Weil generator for the product $K_1 \times \cdots \times K_6$.
  Therefore $h=\prod_i h_i$ is the Weil polynomial for a $20$-dimensional super-isolated variety over $\FF_2$.
\end{example}

\begin{example}
  Let $A/\FF_5$ be the product of the elliptic curves $E_1: y^2 = x^3 + 4x + 2$ and $E_2: y^2 = x^3 + 3x + 2$.
  The Weil polynomials of $E_1$ and $E_2$ are $h_1 = x^2 - 3x + 5$ and $h_2 = x^2 - x + 5$, respectively.
  Using Corollary~\ref{cor:products-of-wg} and Theorem~\ref{thm:product-siav}, it is straightforward to check that $E_1$ and $E_2$ are super-isolated, but $A$ is not, see also Example~\ref{ex:product-of-weil-generators-is-not-enough}.
  Therefore $A$ must be isogeneous, but not isomorphic, to some other abelian surface over $\FF_5$.
  One way to verify this claim is to observe that
  $\ZZ[\pi,\bar{\pi}] \subsetneq \sO_K=\ZZ[\pi_1]\times \ZZ[\pi_2]=\End_{\FF_5}(A).$
  By \cite[Thm.~4.3]{marseglia2021squarefree} we have that there exists an abelian variety $A'$ isogenous to $A$ with $\End_{\FF_5}(A')=\ZZ[\pi,\bar{\pi}]$.
  In particular $A$ is not isomorphic to $A'$.
  More precisely, since $\sO_K$ is the unique over-order of $\ZZ[\pi,\bar{\pi}]$ and $\Pic(\ZZ[\pi,\bar{\pi}])\simeq \ZZ/3\ZZ$,
  by using \cite[Thm.~4.3]{marseglia2021squarefree} we can conclude that the isogeny class of $A$ contains exactly $4$ isomorphism classes of abelian varieties, $3$ of which have endomorphism ring $\ZZ[\pi,\bar{\pi}]$.
\end{example}

\begin{corollary}\label{cor:finite-products-siav}
  Let $g$ be a positive integer.
  There are only finitely many \goodenough{} super-isolated abelian varieties of dimension $g$ that are not simple.
  In particular, there are only finitely many finite fields $\FF_q$ for which such a variety may exist.
\end{corollary}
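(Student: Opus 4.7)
The plan is to reduce this to the finiteness result already established in Theorem~\ref{thm:finite-weil-gens-in-product}. By Theorem~\ref{thm:product-siav}, every non-simple \goodenough{} super-isolated abelian variety $A/\FF_q$ of dimension $g$ corresponds to a pair $(K,\pi)$, where $K=K_1\times\cdots\times K_n$ is a product of $n\ge 2$ CM fields with $\sum [K_i:\QQ]=2g$ and trivial $\Pic(\sO_K)$, and $\pi\in K$ is a Weil generator with $\pi\overline\pi = q$; conversely, any such datum determines $A$ up to isomorphism. Since the number of compositions $g=d_1+\cdots+d_n$ with $n\ge 2$ and $d_i\ge 1$ is finite, it suffices to bound, for each such composition, the number of data $(K_1,\ldots,K_n,\pi)$ with $[K_i:\QQ]=2d_i$.

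Next I would reduce to the two-factor case. For any $i\ne j$ the projection of a Weil generator of $K$ onto $K_i\times K_j$ is again a Weil generator of $K_i\times K_j$: this follows at once from Corollary~\ref{cor:products-of-wg}, since the common-norm condition and the resultant conditions on the real minimal polynomials both restrict cleanly to any pair of indices. Thus it is enough to prove that, for fixed positive integers $d_1,d_2\le g-1$, only finitely many triples $(K_1,K_2,(\pi_1,\pi_2))$ exist with $[K_i:\QQ]=2d_i$ and $(\pi_1,\pi_2)$ a Weil generator of $K_1\times K_2$.

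For a fixed pair $(K_1,K_2)$ this is exactly Theorem~\ref{thm:finite-weil-gens-in-product}; what remains, and what I expect to be the main obstacle, is to show that only finitely many pairs $(K_1,K_2)$ occur. I would approach this by bounding $q=\pi_1\overline\pi_1=\pi_2\overline\pi_2$: once $q$ is bounded, each Weil polynomial $h_i$ has degree $\le 2g$ and integer coefficients bounded by a function of $q$, so there are finitely many candidate $h_i$ and hence finitely many $K_i$, and by Honda--Tate only finitely many $\FF_q$-isogeny classes of dimension $g$. To bound $q$ I would revisit the proof of Lemma~\ref{lem:wg-products}, where each such Weil generator is exhibited as a $\ZZ$-point on a zero-dimensional variety cut out by Equations~\eqref{eq:111}--\eqref{eq:333}. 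Combining the norm identity of Lemma~\ref{lem:weil-gen-to-curve} with the resultant constraint $|\Res(g_1,g_2)|=1$ of Corollary~\ref{cor:products-of-wg} should force $q$ into a finite set whose size is controlled by $d_1,d_2$: heuristically, for large $q$ the real minimal polynomials $g_1,g_2$ have roots spread over the long interval $[-2\sqrt q,2\sqrt q]$, and the rigid requirement that their pairwise resultant equal a unit becomes unattainable outside a bounded range of $q$. Since each $A$ is defined over a unique $\FF_q$, the ``in particular'' clause follows immediately.
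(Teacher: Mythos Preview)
Your reduction via Theorem~\ref{thm:product-siav} and the invocation of Theorem~\ref{thm:finite-weil-gens-in-product} for each \emph{fixed} product $K_1\times\cdots\times K_n$ are both correct, and match the paper's argument exactly on those steps. The gap is in the step you yourself flag as ``the main obstacle'': bounding the number of possible pairs $(K_1,K_2)$.

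Your proposed route---bound $q$ first, then bound the Weil polynomials---does not go through as written. The heuristic that the resultant condition $|\Res(g_1,g_2)|=1$ becomes ``unattainable'' for large $q$ is not justified, and in fact the machinery of Lemma~\ref{lem:wg-products} is set up for a \emph{fixed} pair of fields: the right-hand sides of Equations~\eqref{eq:111} and~\eqref{eq:222} involve $\Disc_{K_i}/\Disc_{F_i}^2$, which varies unboundedly as $K_i$ ranges over all CM fields of a given degree. So revisiting that proof does not give a uniform bound on $q$ independent of the fields.

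The paper bypasses this entirely with one external input you are missing: the condition $\Pic(\sO_K)=1$ forces each $K_i$ to have class number~$1$, and by Stark's effective Brauer--Siegel theorem \cite[Thm.~2]{stark1974effective} there are only finitely many CM fields of any fixed degree with class number~$1$. Hence there are only finitely many candidate products $K$ with $\dim_\QQ K = 2g$, and then Theorem~\ref{thm:finite-weil-gens-in-product} finishes the job. Your detour through the two-factor case and the attempt to bound $q$ directly are unnecessary once Stark's result is in hand.
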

\begin{proof}
  By Theorem~\ref{thm:product-siav} it is sufficient to count Weil generators in products of CM fields with class number $1$.
  In \cite[Thm.~2, p.~136]{stark1974effective}, Stark showed that for any fixed degree, there are finitely many CM fields with class number~$1$ with that degree.
  Hence there are finitely many products $K_1 \times \cdots \times K_n$ with $\sum \deg K_i = 2g$ such that each $K_i$ is a CM field with class number $1$.
  By Theorem~\ref{thm:finite-weil-gens-in-product}, any such product of fields contains only finitely many Weil generators.
\end{proof}

\subsection{Products of super-isolated elliptic curves and abelian surfaces}
\label{sec:enum_siav}

In this section we outline a general strategy to enumerate certain products of super-isolated varieties.

\begin{algorithm}[h]
  \caption{Enumerate Weil Generators}\label{alg:find-wg-2}
  \begin{algorithmic}[1]
    \REQUIRE A product $K$ of CM fields $K_1 $ and $ K_2$, with maximal totally real subfields $F_1$ and $F_2$, respectively.
    \ENSURE All Weil generators for $K$.
    \STATE $F \gets F_1 \times F2$
    \STATE Find $\gamma_i$ such that $\sO_K = \sO_{F}[\gamma_i]$
    \STATE $T_i \gets $ a complete set of $\eta_i \in F_i$ such that $\sO_{F_i} = \ZZ[\eta_i]$ up to integer translation
    \FORALL{$(\eta_1,\eta_2) \in T_1 \times T_2$}
      \STATE $X_{\eta_1,\eta_2} \gets$ the variety defined in the proof of Lemma~\ref{lem:wg-products}
      \FORALL{$P \in X_{\eta_1,\eta_2}(\QQ)$}
        \FORALL{$u_i \in \sO_{F_i}^\times$ with $u_i^2\Norm_{K/F}(\gamma_i - \overline{\gamma}_i) + (\eta_i + P_{x_i})^2 = 4P_y$}
        \STATE $\alpha_i \gets \left(u_i(\gamma_i - \overline{\gamma}_i) + \eta_i + P_{x_i}\right)/2$
        \IF{$\alpha_1 \in \sO_{K_1}$ and $\alpha_2 \in \sO_{K_2}$}
          \PRINT $(\alpha_1,\alpha_2)$
        \ENDIF
        \ENDFOR
      \ENDFOR
    \ENDFOR
  \end{algorithmic}
\end{algorithm}

\begin{proposition}
\label{prop:alg1}
  Let $K$ be a product of CM fields $K_1 $ and $ K_2$, with maximal totally real subfields $F_1$ and $F_2$, respectively.
  Algorithm~\ref{alg:find-wg-2} exactly outputs all Weil generators for $K_1 \times K_2$.
\end{proposition}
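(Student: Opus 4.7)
The plan is to prove both soundness (every pair the algorithm prints is a Weil generator of $K_1 \times K_2$) and completeness (every Weil generator is printed). Both directions pivot on Equation~\eqref{eq:alpha_u_gamma_eta_a} and Corollary~\ref{cor:products-of-wg}, which reduces being a Weil generator of the product to three conditions: being a Weil generator in each factor, the common-norm condition, and $|\Res(g_1, g_2)| = 1$.

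For soundness, I would fix an output $(\alpha_1, \alpha_2)$ coming from data $(\eta_1, \eta_2) \in T_1 \times T_2$, $P = (P_{x_1}, P_{x_2}, P_y) \in X_{\eta_1,\eta_2}(\QQ)$ and units $u_i$, so that $\alpha_i = (u_i(\gamma_i - \bar\gamma_i) + \eta_i + P_{x_i})/2$, and verify the three conditions of Corollary~\ref{cor:products-of-wg}. The unit constraint, combined with $(\gamma_i - \bar\gamma_i)^2 = -\Norm_{K_i/F_i}(\gamma_i - \bar\gamma_i)$, yields $4\alpha_i\bar\alpha_i = 4P_y$, so $\alpha_i\bar\alpha_i = P_y$ is a common rational; it lies in $\ZZ$ because $\alpha_i \in \sO_{K_i}$ by the explicit membership test. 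The coprime-resultant condition is exactly Equation~\eqref{eq:333} defining $X_{\eta_1,\eta_2}$, since the minimal polynomial of $\alpha_i + \bar\alpha_i = \eta_i + P_{x_i}$ is $\prod_\sigma(x - \sigma(\eta_i) - P_{x_i})$. Showing each $\alpha_i$ is a Weil generator of $K_i$ is the remaining point: $\eta_i \in T_i$ yields $\sO_{F_i} = \ZZ[\eta_i] = \ZZ[\alpha_i + \bar\alpha_i] \subseteq \ZZ[\alpha_i, \bar\alpha_i]$, and expanding $\alpha_i = u_i\gamma_i + c_i$ in the $\sO_{F_i}$-basis $\{1,\gamma_i\}$ of $\sO_{K_i}$, the hypothesis $\alpha_i \in \sO_{K_i}$ forces $c_i \in \sO_{F_i}$; since $u_i \in \sO_{F_i}^\times$, this gives $\sO_{F_i}[\alpha_i] = \sO_{F_i} + \sO_{F_i} u_i\gamma_i = \sO_{K_i}$, and consequently $\ZZ[\alpha_i, \bar\alpha_i] = \sO_{K_i}$.

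For completeness, let $(\alpha_1, \alpha_2)$ be any Weil generator of $K_1 \times K_2$. By Corollary~\ref{cor:products-of-wg}, each $\alpha_i$ is a Weil generator of $K_i$, and Equation~\eqref{eq:alpha_u_gamma_eta_a} gives a unique presentation $\alpha_i = (u_i(\gamma_i - \bar\gamma_i) + \eta_i + a_i)/2$ with $(u_i, \eta_i, a_i) \in \sO_{F_i}^\times \times T_i \times \ZZ$. Putting $q = \alpha_1\bar\alpha_1 = \alpha_2\bar\alpha_2 \in \ZZ$, Lemma~\ref{lem:weil-gen-to-curve} places $(a_1, a_2, q)$ on Equations~\eqref{eq:111} and \eqref{eq:222}, while Corollary~\ref{cor:products-of-wg} places it on Equation~\eqref{eq:333}; thus $(a_1, a_2, q) \in X_{\eta_1,\eta_2}(\QQ)$. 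The outer loop visits $(\eta_1, \eta_2)$, the middle loop encounters $P = (a_1, a_2, q)$, the inner loop enumerates units solving the quadratic constraint and so includes $(u_1,u_2)$, and the $\sO_{K_i}$-membership test is passed because $\alpha_i$ is already an algebraic integer; hence $(\alpha_1, \alpha_2)$ is printed.

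The main obstacle is the upgrade, within soundness, from ``$\alpha_i$ has the syntactic form of Equation~\eqref{eq:alpha_u_gamma_eta_a} and lies in $\sO_{K_i}$'' to ``$\ZZ[\alpha_i, \bar\alpha_i] = \sO_{K_i}$''; the conditions $\eta_i \in T_i$ and $u_i \in \sO_{F_i}^\times$ are precisely what make this deduction work, since the first forces $\sO_{F_i} \subseteq \ZZ[\alpha_i, \bar\alpha_i]$ while the second ensures $\{1, u_i\gamma_i\}$ remains an $\sO_{F_i}$-basis of $\sO_{K_i}$. Everything else is careful bookkeeping between the loop structure and the triple data of a Weil generator.
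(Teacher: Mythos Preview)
Your proof is correct and follows essentially the same approach as the paper: both directions hinge on Corollary~\ref{cor:products-of-wg} together with the parametrization~\eqref{eq:alpha_u_gamma_eta_a} and the defining equations of $X_{\eta_1,\eta_2}$. Where the paper invokes \cite[Lem.~3.13]{scholl2019super} to deduce that $\alpha_i \in \sO_{K_i}$ forces $\ZZ[\alpha_i,\bar\alpha_i] = \sO_{K_i}$, you instead supply the short direct argument (via $\sO_{F_i} = \ZZ[\eta_i]$ and the $\sO_{F_i}$-basis $\{1,\gamma_i\}$), and you also make the common-norm verification $\alpha_1\bar\alpha_1 = \alpha_2\bar\alpha_2 = P_y$ explicit---but the overall structure is the same.
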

\begin{proof}
  As seen in the proof of Lemma~\ref{lem:wg-products}, every Weil generator $(\alpha_1,\alpha_2)$ of $K$ corresponds to a rational point on one of the varieties $X_{\eta_1,\eta_2}$ for some $\eta_1,\eta_2 \in T_1 \times T_2$.
  The algorithm ranges over all rational points on all such varieties and finds all possible pairs $(\alpha_1,\alpha_2)$ which could be Weil generators.
  This means that the output will include all Weil generators.

  It remains to show that everything the algorithm outputs is a Weil generator for $K_1 \times K_2$.
  Suppose that the algorithm outputs $(\alpha_1,\alpha_2)$ from a point on $X_{\eta_1,\eta_2}$ for some $(\eta_1,\eta_2) \in T_1 \times T_2$.
  It is straightforward to check that if $\alpha_i \in \sO_{K_i}$ then it is indeed a Weil generator for $K_i$ using \cite[Lem.~3.13]{scholl2019super}.
  By construction of $X_{\eta_1,\eta_2}$, it follows that $(\alpha_1,\alpha_2)$ satisfies the resultant condition in Corollary~\ref{cor:products-of-wg}.
  Therefore $(\alpha_1,\alpha_2)$ is a Weil generator for $K_1 \times K_2$.
\end{proof}

\begin{remark}
\label{rmk:alg1}
  Algorithm~\ref{alg:find-wg-2} can be extended to arbitrary products $K_1 \times \cdots \times K_n$ in a straightforward way:
  First compute the set $W_{i,j}$ of all Weil generators in the sub-product $K_i \times K_j$ for all $1 \leq i,j \leq n$.
  Then search for tuples $(\alpha_1,\dots,\alpha_n)$ with $(\alpha_i,\alpha_j) \in W_{i,j}$ for all $1 \leq i,j \leq n$.
\end{remark}

Using Tables of CM fields of degree $2$ and $4$ with class number one, we can enumerate all \goodenough{} superisolated abelian varieties that factor into products of elliptic curves and surfaces.
\begin{corollary}
  There are $240$ \goodenough{} super-isolated abelian varieties that factor into a product of curves and surfaces.
  A summary of some of their characteristics are given in Table~\ref{tab:siav-data}.
  \footnote{The raw data collected and source code can be found at \url{https://github.com/tscholl2/siav-polarizations-products}.}
\end{corollary}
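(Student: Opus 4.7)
The plan is to apply Theorem~\ref{thm:product-siav} systematically to the finite list of CM fields that can arise. A \goodenough{} super-isolated abelian variety $A$ which is a product of elliptic curves and surfaces corresponds to a Weil generator $\pi = (\pi_1,\dots,\pi_n)$ in a product $K = K_1 \times \cdots \times K_n$, where each $K_i = \QQ(\pi_i)$ is a CM field of degree $2$ or $4$ with $\Pic(\sO_{K_i})$ trivial. The imaginary quadratic fields with class number $1$ are the nine Heegner--Stark fields, and the quartic CM fields with class number $1$ form a known finite list (tabulated and available, e.g., through the LMFDB and the references cited in \cite{scholl2019super}). This gives a completely explicit finite set of candidate product fields $K$ to check.

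For each such $K = K_1 \times \cdots \times K_n$, I would enumerate its Weil generators as follows. When $n = 2$, Algorithm~\ref{alg:find-wg-2} produces the complete list: its termination is guaranteed because each variety $X_{\eta_1,\eta_2}$ is zero-dimensional by Lemma~\ref{lem:wg-products}, so eliminating $x_2$ reduces to intersecting two plane curves and one factors the resulting univariate polynomials over $\QQ$ to read off the integer points. Proposition~\ref{prop:alg1} certifies correctness. When $n \geq 3$, apply the extension in Remark~\ref{rmk:alg1}: first tabulate $W_{i,j}$ for every pair $(K_i,K_j)$, then form all tuples $(\alpha_1,\dots,\alpha_n)$ whose two-factor projections all lie in the corresponding $W_{i,j}$. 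By Corollary~\ref{cor:products-of-wg} this pairwise compatibility characterizes Weil generators exactly, so no generator is missed and no spurious tuple is kept.

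Finally, one aggregates: for each admissible sequence of dimensions $(\dim A_1,\dots,\dim A_n)$ with $\dim A_i \in \{1,2\}$, range over all ordered tuples of CM fields from the class-number-one lists, compute the Weil generators as above, and translate each into an abelian variety via Theorem~\ref{thm:product-siav}. After identifying tuples that differ only by permutation of factors (so that an abelian variety $A$ is counted once), the totals should add up to $240$, with the finer statistics (characteristic $p$, base field $q$, decomposition type, etc.) recorded in Table~\ref{tab:siav-data}.

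The main obstacle is computational rather than conceptual: the sets $T_i$ of translation-classes of monogenic generators for $\sO_{F_i}$ must be computed effectively via Győry's bounds \cite{gyory1976}, and for each $(\eta_1,\eta_2)$ the integer points on $X_{\eta_1,\eta_2}$ must be found despite the defining polynomials having high total degree. Since the curves are geometrically irreducible (Lemma~\ref{lem:curve-is-irreducible}) and distinct, Bézout gives an a~priori bound on the number of solutions, but avoiding duplicate counting across the different components $X_{\eta_1,\eta_2}$ and across different factor orderings demands careful bookkeeping; the authors handle this in the accompanying source code.
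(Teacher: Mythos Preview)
Your proposal is correct and follows essentially the same approach as the paper: enumerate CM fields of degree $2$ and $4$ with class number $1$, run Algorithm~\ref{alg:find-wg-2} on each pair to produce the sets $W_{i,j}$, filter for the conditions of Definition~\ref{def:good-enough}, and then assemble larger products via pairwise compatibility as in Remark~\ref{rmk:alg1}. The only presentational difference is that the paper phrases the last step as clique enumeration in a graph whose vertices are the Weil generators occurring in some $W_{i,j}$ and whose edge set is $\bigcup W_{i,j}$, which is exactly your tuple-with-compatible-projections search recast in graph language.
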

\begin{proof}
  We implemented Algorithm~\ref{alg:find-wg-2} in {\tt\nocite{sage}Sage} and found all \goodenough{} super-isolated abelian varieties that decompose into a product of curves and surfaces.
  This was done by first finding a complete list of CM fields with class number $1$ and degree $\leq 4$ (see \cite[Tbl.~3]{scholl2019super} for references).
  For each pair $K_i,K_j$ of such fields, we computed the set of Weil generators $W_{i,j}$ in $K_i \times K_j$.
  We filtered the Weil generators whose minimal polynomials satisfy the conditions in Definition~\ref{def:good-enough}, as these correspond to simple \goodenough{} varieties.
  Next we organized the data into a graph $\mathcal{G}$ as follows.
  The vertex set of $\mathcal{G}$ is given by all Weil generators appearing as part of a pair in some $W_{i,j}$.
  We add an edge between $\alpha_1$ and $\alpha_2$ if $(\alpha_1,\alpha_2) \in W_{i,j}$ for some $i,j$.
  That is, the edge set of $\mathcal{G}$ is the union of the $W_{i,j}$.
  Finally, we used standard methods to enumerate all complete subgraphs (cliques) of size $\geq 2$ in $\mathcal{G}$.
\end{proof}

\begin{table}[!ht]
  \begin{center}
  \begin{tabular}{|c||c|c|c|c|c|}\hline
  $q$ & $1 \times 1$ & $1 \times 2$ & $1 \times 1 \times 2$ & $1 \times 2 \times 2$ & $2 \times 2$ \\\hline\hline
  $2$ & $4$ & $24$ & $10$ & $12$ & $18$ \\\hline
  $3$ & $4$ & $24$ & $6$ & $12$ & $18$ \\\hline
  $4$ & $ $ & $2$ & $ $ & $ $ & $ $ \\\hline
  $5$ & $2$ & $12$ & $ $ & $2$ & $6$ \\\hline
  $7$ & $ $ & $8$ & $ $ & $ $ & $ $ \\\hline
  $8$ & $ $ & $2$ & $ $ & $ $ & $ $ \\\hline
  $9$ & $ $ & $2$ & $ $ & $ $ & $ $ \\\hline
  $11$ & $2$ & $8$ & $2$ & $4$ & $4$ \\\hline
  $13$ & $ $ & $6$ & $ $ & $ $ & $ $ \\\hline
  $17$ & $2$ & $8$ & $2$ & $ $ & $ $ \\\hline
  $19$ & $ $ & $ $ & $ $ & $ $ & $2$ \\\hline
  $32$ & $ $ & $2$ & $ $ & $ $ & $ $ \\\hline
  $41$ & $ $ & $2$ & $ $ & $ $ & $ $ \\\hline
  $47$ & $ $ & $4$ & $ $ & $ $ & $ $ \\\hline
  $59$ & $ $ & $2$ & $ $ & $ $ & $ $ \\\hline
  $61$ & $ $ & $2$ & $ $ & $ $ & $ $ \\\hline
  $83$ & $2$ & $ $ & $ $ & $ $ & $ $ \\\hline
  $101$ & $2$ & $ $ & $ $ & $ $ & $ $ \\\hline
  $173$ & $ $ & $2$ & $ $ & $ $ & $ $ \\\hline
  $227$ & $2$ & $ $ & $ $ & $ $ & $ $ \\\hline
  $257$ & $2$ & $ $ & $ $ & $ $ & $ $ \\\hline
  $283$ & $ $ & $2$ & $ $ & $ $ & $ $ \\\hline
  $383$ & $ $ & $2$ & $ $ & $ $ & $ $ \\\hline
  $1523$ & $2$ & $ $ & $ $ & $ $ & $ $ \\\hline
  $1601$ & $2$ & $ $ & $ $ & $ $ & $ $ \\\hline
  $18131$ & $ $ & $2$ & $ $ & $ $ & $ $ \\\hline
  \end{tabular}
  \end{center}
  \caption{The number of \goodenough{} super-isolated products of elliptic curves and abelian surfaces over each finite field $\FF_q$. Each column represents a decomposition type, so for example $1 \times 1$ represents the product of two non-isogenous elliptic curves. If the cell is empty, it means there is no super-isolated abelian variety over $\FF_q$ with the prescribed decomposition type.}
  \label{tab:siav-data}
\end{table}

\begin{remark}
  Observe that by \cite[Cor.~7.6]{scholl2019super}, for every fixed dimension $g>2$ we have only finitely many (ordinary) super-isolated abelian varieties.
  This was shown by mapping Weil generators to integral points on degree $g$ plane curves.
  Given a way to enumerate the integral points on such a curve, it is straightforward to enumerate super-isolated abelian varieties of dimension $g$.
  However, computing integral points on high degree curves (which often have high genus) is a difficult problem.
  If instead we restrict to enumerating non-trivial products of super-isolated $g$-folds, then Algorithm~\ref{alg:find-wg-2} only requires enumerating points on a dimension $0$ variety (of degree $\approx g^3$).
  This seems to suggest that finding products of super-isolated varieties is easier than finding singletons.
\end{remark}

\subsection{Powers of super-isolated abelian varieties}

In the case where $A$ is isogenous to a power, we can apply the results from \cite{marseglia2019power} to prove the following Theorem.

\begin{theorem}\label{thm:powers}
  Let $A/\FF_q$ be an \goodenough{} abelian variety.
  If $A$ is super-isolated, then $A^n$ is super-isolated for every $n\geq 1$.
  Conversely, if there exists $n\geq 1$ such that $A^n$ is super-isolated then $A$ is super-isolated.
\end{theorem}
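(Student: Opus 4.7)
The plan is to invoke the equivalence of categories from \cite{marseglia2019power}, which extends the equivalences of \cite{deligne1969varieties,centeleghe2015categories,marseglia2021squarefree} to powers of \goodenough{} abelian varieties. Under this equivalence, the abelian varieties in the isogeny class of $A^n$ correspond to finitely generated torsion-free $R$-modules of rank $n$ inside $K = \prod K_i$, where $R = \ZZ[\pi,\overline{\pi}]$ is the Frobenius order. The variety $A^n$ itself corresponds to the free module $R^n$, so $A^n$ is super-isolated if and only if every such rank $n$ module is isomorphic to $R^n$.

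For the forward direction, assume $A$ is super-isolated. By Theorem~\ref{thm:product-siav}, we have $R = \sO_K$ and $\Pic(\sO_K) = 1$. Since $\sO_K = \prod \sO_{K_i}$ is a product of Dedekind domains with trivial class groups, Steinitz's structure theorem applied componentwise shows that every torsion-free $\sO_K$-module of rank $n$ is free. Translating back via the equivalence, every abelian variety isogenous to $A^n$ is isomorphic to $A^n$, so $A^n$ is super-isolated.

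For the reverse direction, I argue the contrapositive: if $A$ is not super-isolated then $A^n$ is not super-isolated for any $n \geq 1$. By Theorem~\ref{thm:product-siav}, either $R \subsetneq \sO_K$ or $\Pic(\sO_K) \neq 1$. In the first case, $R^n$ and $\sO_K^n$ are non-isomorphic rank $n$ $R$-modules, because $\End_R(R^n) = M_n(R)$ and $\End_R(\sO_K^n) = M_n(\sO_K)$ differ as rings. In the second case, choose a fractional $\sO_K$-ideal $I$ with nontrivial class; then $\sO_K^{n-1} \oplus I$ and $\sO_K^n$ have different Steinitz classes in $\Pic(\sO_K)$ and are thus non-isomorphic as $\sO_K$-modules, hence also as $R$-modules. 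In either case, the equivalence produces two non-isomorphic abelian varieties in the isogeny class of $A^n$, contradicting super-isolation of $A^n$.

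The main obstacle is verifying that the equivalence of \cite{marseglia2019power} indeed governs the isomorphism classes in the isogeny class of $A^n$ for every $n \geq 1$ (noting that $A^n$ is itself \emph{not} squarefree, so the equivalence of \cite{marseglia2021squarefree} does not apply directly) and that it faithfully transports direct sums, endomorphism rings, and Steinitz invariants. Once this foundational input is in place, the module-theoretic argument above is essentially formal and uniform in $n$.
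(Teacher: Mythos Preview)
Your proposal is correct and follows essentially the same route as the paper: both invoke the module-theoretic classification from \cite[Thm.~4.1]{marseglia2019power} and reduce the question to Steinitz theory over $R=\ZZ[\pi,\overline\pi]$. The paper compresses both directions into the single equivalence ``$R=\sO_K$ with trivial $\Pic$'' $\Leftrightarrow$ ``unique rank-$n$ torsion-free $R$-module for every $n$'', whereas you unpack the converse into the two cases $R\subsetneq\sO_K$ and $\Pic(\sO_K)\neq 1$; your citation of Theorem~\ref{thm:product-siav} is in fact more precise than the paper's reference to Theorem~\ref{thm:siav}, since $A$ need not be simple.
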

\begin{proof}
  By Theorem \ref{thm:siav} we have that $A$ is super-isolated if and only if $K$ has class number $1$ and $\ZZ[\pi,\overline{\pi}] = \sO_K$.
  By Steinitz theory, see \cite{Steinitz11}, this is equivalent to having a unique isomorphism class of torsion-free $\ZZ[\pi,\overline{\pi}]$-modules of rank $n$ (for any $n\geq 1$), which is represented by
  \[
    \sO_K^n=\sO_K\oplus\ldots\oplus\sO_K.
  \]
  Using the classification given in \cite[Theorem 4.1]{marseglia2019power}, one sees that  this happens if and only if $A^n$ is super-isolated.
\end{proof}

\section{Principal polarizations}
\label{sec:siav-pol}

In general, (principal) polarizations of abelian varieties in an ordinary square-free isogeny class can be computed up to polarized isomorphisms using \cite[Alg.~3]{marseglia2021squarefree}.
However, in this section we show that if the isogeny class is super-isolated, then the situation is much simpler.
For a CM type $\Phi$ of a CM field $K$ we denote by $N_\Phi$ the associated norm, that is,
\[ N_\Phi(\alpha) = \prod_{\varphi \in \Phi} \varphi(\alpha), \]
for every $\alpha \in K$.

\subsection{Existence of principal polarizations}
\label{sec:pol-existence}
\begin{lemma}\label{lem:type-norm-diff-1}
  Let $K$ be a CM field of degree $2g$ with maximal totally real subfield $F$.
  Let $\Phi$ be a CM type of $K$, and $\alpha \in K$ satisfying $\sO_K = \sO_{F}[\alpha]$.
  Then the following statements are equivalent:
  \begin{enumerate}
      \item \label{lem:type-norm-diff-1:1} $K/F$ is unramified at all finite primes.
      \item \label{lem:type-norm-diff-1:2} $\alpha - \overline{\alpha} \in \sO_K^\times$.
      \item \label{lem:type-norm-diff-1:3} $\Norm_{K/\QQ}(\alpha - \overline{\alpha}) = 1$.
      \item \label{lem:type-norm-diff-1:4} $N_\Phi(\alpha - \overline{\alpha}) = \pm 1$.
  \end{enumerate}
  Moreover, if any of the statements holds then $g$ is even.
\end{lemma}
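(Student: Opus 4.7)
The plan has three stages: first prove $(1)\Leftrightarrow(2)\Leftrightarrow(3)$ via a computation of the relative different and a positivity argument; then handle $(3)\Leftrightarrow(4)$ together with one direction of the parity claim via a symmetry identity for $N_\Phi$; and finally close the cycle by deducing that $g$ is even from $(1)$ using class field theory.

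For $(1)\Leftrightarrow(2)$, the hypothesis $\sO_K=\sO_F[\alpha]$ together with $[K:F]=2$ forces $\{1,\alpha\}$ to be a free $\sO_F$-basis of $\sO_K$, and the minimal polynomial of $\alpha$ over $F$ is $f(x)=(x-\alpha)(x-\overline{\alpha})$. The standard formula for the different of a monogenic extension then gives $\mathfrak{d}_{K/F}=(f'(\alpha))\sO_K=(\alpha-\overline{\alpha})\sO_K$, so $K/F$ is unramified at all finite primes iff $\mathfrak{d}_{K/F}$ is trivial iff $\alpha-\overline{\alpha}\in\sO_K^\times$. For $(2)\Leftrightarrow(3)$: $\alpha-\overline{\alpha}$ is an algebraic integer, hence a unit iff $|\Norm_{K/\QQ}(\alpha-\overline{\alpha})|=1$. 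Pairing the $2g$ complex embeddings into conjugate pairs $(\varphi,\overline{\varphi})$ and using $\overline{\varphi}(\alpha-\overline{\alpha})=\overline{\varphi(\alpha-\overline{\alpha})}$ yields
\[
  \Norm_{K/\QQ}(\alpha-\overline{\alpha}) = \prod_{\varphi\in\Phi}|\varphi(\alpha-\overline{\alpha})|^2 = |N_\Phi(\alpha-\overline{\alpha})|^2 \geq 0,
\]
so the $\pm1$ refines to $+1$.

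For $(3)\Leftrightarrow(4)$ the central identity is
\[
  \overline{N_\Phi(\alpha-\overline{\alpha})} = \prod_{\varphi\in\Phi}\overline{\varphi}(\alpha-\overline{\alpha}) = \prod_{\varphi\in\Phi}\bigl(-\varphi(\alpha-\overline{\alpha})\bigr) = (-1)^g\, N_\Phi(\alpha-\overline{\alpha}),
\]
which follows from $\overline{\varphi}=\varphi\circ c$ (where $c$ denotes the CM involution) and $c(\alpha-\overline{\alpha})=-(\alpha-\overline{\alpha})$. The implication $(4)\Rightarrow(3)$ is immediate from $|N_\Phi(\alpha-\overline{\alpha})|^2=\Norm_{K/\QQ}(\alpha-\overline{\alpha})$; moreover, since $N_\Phi(\alpha-\overline{\alpha})=\pm1$ is a nonzero real, the identity forces $(-1)^g=1$. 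Conversely, once we know $g$ is even, the identity places $N_\Phi(\alpha-\overline{\alpha})\in\RR$, and $(3)$ then forces $N_\Phi(\alpha-\overline{\alpha})=\pm1$, giving $(4)$.

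The step I expect to be the main obstacle is proving $g$ even from $(1)$ alone, which is what closes the cycle to $(4)$. I would argue via class field theory: $(1)$ places $K$ inside the narrow Hilbert class field of $F$, so $K/F$ corresponds to a character $\chi:\mathrm{Cl}^+(F)\to\{\pm1\}$. Since $K$ is CM it ramifies at every infinite place, forcing $\chi(c_v)=-1$ for each infinite place $v$ of $F$ (with $c_v$ the image of complex conjugation). Via the canonical exact sequence
\[
  \sO_F^\times \xrightarrow{\,\mathrm{sgn}\,} \{\pm1\}^g \to \mathrm{Cl}^+(F) \to \Pic(\sO_F) \to 1,
\]
the restriction of $\chi$ to $\{\pm1\}^g$ is the product-of-signs character $(\epsilon_i)\mapsto\prod_i\epsilon_i$, which must vanish on the image of $\mathrm{sgn}(\sO_F^\times)$. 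Applying this to $-1\in\sO_F^\times$, whose sign vector is $(-1,\ldots,-1)$, yields $(-1)^g=1$, hence $g$ is even.
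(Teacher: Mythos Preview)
Your proof is correct and tracks the paper closely for $(1)\Leftrightarrow(2)\Leftrightarrow(3)$: both use that $\mathfrak{d}_{K/F}=(\alpha-\overline{\alpha})\sO_K$ when $\sO_K=\sO_F[\alpha]$, and both use non-negativity of norms in CM fields to upgrade $|\Norm|=1$ to $\Norm=1$. For the link to $(4)$, the paper records the identity as $\Norm_{K/\QQ}(\alpha-\overline{\alpha})=(-1)^g\,N_\Phi(\alpha-\overline{\alpha})^2$, which is equivalent to your conjugation identity $\overline{N_\Phi(\alpha-\overline{\alpha})}=(-1)^g N_\Phi(\alpha-\overline{\alpha})$; the deductions $(4)\Rightarrow(3)$ and $(4)\Rightarrow g$ even proceed the same way in both.

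The one substantive difference is how the parity is obtained from $(1)$ in order to close $(3)\Rightarrow(4)$. The paper simply cites \cite[Lem.~10.2]{howe1995principally} for ``$K/F$ unramified at all finite primes $\Rightarrow g$ even''. You instead give a self-contained class-field-theoretic argument: $K\subseteq H_F^+$, the associated quadratic character on $\mathrm{Cl}^+(F)$ is $-1$ at every archimedean Frobenius (since every real place of $F$ ramifies in the CM extension $K$), hence pulls back to the product-of-signs character on $\{\pm1\}^g$, and evaluating on $\mathrm{sgn}(-1)$ forces $(-1)^g=1$. This argument is correct and makes the lemma independent of the external reference; the paper's citation is shorter but less transparent.
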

\begin{proof}
  Observe that we have
  \begin{equation}
  \label{eq:norms}
    \Norm_{K/\QQ}(\alpha - \overline{\alpha})
    =
    N_\Phi(\alpha - \overline{\alpha})N_\Phi(\overline{\alpha} - \alpha)
    =
    (-1)^gN_\Phi(\alpha - \overline{\alpha})^2.
  \end{equation}
  By \cite[Ch.~III, Prop.~2.4]{neukrich1999algebraic}, $(\alpha - \overline{\alpha})\sO_K$ is the relative different ideal $\Diff_{K/F}$.
  So $K/F$ is unramified at all finite primes if and only if $\alpha - \overline{\alpha} \in \sO_K^\times$.
  As all norms in CM fields are non-negative, this is equivalent to $\Norm_{K/\QQ}(\alpha - \overline{\alpha}) = 1$.
  Hence we have proved that \eqref{lem:type-norm-diff-1:1}, \eqref{lem:type-norm-diff-1:2} and \eqref{lem:type-norm-diff-1:3} are equivalent.
  Using Equation \eqref{eq:norms} we get that \eqref{lem:type-norm-diff-1:3} is also equivalent to
  \[
    1
    = (-1)^gN_\Phi(\alpha - \overline{\alpha})^2.
  \]
  If $K/F$ is unramified at all finite primes, then $g$ is even by \cite[Lem.~10.2]{howe1995principally} and we get that $N_\Phi(\alpha - \overline{\alpha}) = \pm 1$.
  Conversely if $N_\Phi(\alpha - \overline{\alpha}) = \pm 1$ then by Equation \eqref{eq:norms} we get $\Norm_{K/\QQ}(\alpha - \overline{\alpha}) = (-1)^g$.
  Since norms are positive in CM-fields, we deduce that $g$ is even and $K/F$ is unramified. This concludes the proof.
\end{proof}

Let $A/\FF_q$ be a simple ordinary super-isolated abelian variety of dimension $g$.
Let $h$ be the Weil polynomial of $A$.
Put $K=\QQ[x]/h=\QQ(\pi)$.
Fix an isomorphism $j:\overline{\QQ}_p \simeq \CC$ and
let $\Phi$ be the set of embeddings $\phi: K \to \CC$ such that $\nu(\phi(\pi)) > 0$, where $\nu$ is the $p$-adic valuation on $\CC$ induced by $j$.
Since $A$ is ordinary, precisely half of the roots of $h$ are $p$-adic units.
Therefore $\Phi$ is a CM-type of $K$.

\begin{lemma}\label{lem:pp-iff-type-norm-m1}
  The abelian variety $A$ does not admit a principal polarization if and only if $N_\Phi(\pi - \overline{\pi}) = -1$.
\end{lemma}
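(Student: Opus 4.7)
The plan is to apply the Deligne--Howe description of polarizations on ordinary abelian varieties (cf.\ \cite{howe1995principally,marseglia2021squarefree}). Since $A$ is simple ordinary super-isolated, Theorem~\ref{thm:siav} gives $\End(A) = \sO_K$ and $\sO_K = \sO_F[\pi]$. Under this dictionary, a principal polarization on $A$ corresponds to an element $\lambda \in K^\times$ satisfying $\bar\lambda = -\lambda$, $\phi(\lambda) \in i\RR_{>0}$ for every $\phi \in \Phi$ (``$\Phi$-positivity''), and $\lambda \sO_K = \Diff_{K/\QQ}^{-1}$. I aim to show such a $\lambda$ exists if and only if $N_\Phi(\pi - \bar\pi) \neq -1$.

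The first step is to unpack the different. From $\sO_K = \sO_F[\pi]$ together with the minimal polynomial $x^2 - (\pi+\bar\pi)x + q$ of $\pi$ over $F$ one obtains $\Diff_{K/F} = (\pi - \bar\pi)\sO_K$, and the tower formula then yields $\Diff_{K/\QQ} = (\pi - \bar\pi)\cdot \Diff_{F/\QQ}\sO_K$. Writing $\lambda = \mu/(\pi - \bar\pi)$ reduces the problem to finding $\mu \in K^\times$ with $\mu \sO_K = \Diff_{F/\QQ}^{-1}\sO_K$, totally real (so that $\lambda$ is totally imaginary), and such that the resulting $\lambda$ is $\Phi$-positive. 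Since $\sO_K$ has class number $1$, the ideal condition on $\mu$ can always be arranged.

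Next I will reduce existence to a signature question. Fixing one admissible $\mu_0$ (the total-reality condition costing only a Hilbert--90-type adjustment of the conjugation action on $\sO_K^\times$), every other admissible $\mu$ is $u\mu_0$ for $u \in \sO_F^\times$. The $\Phi$-positivity of $\lambda = u\mu_0/(\pi - \bar\pi)$ then translates into a prescribed sign pattern $(\epsilon_\sigma) \in \{\pm 1\}^g$, indexed by real embeddings $\sigma: F \to \RR$, that $\sigma(u)$ must match, determined by $\sigma(\mu_0)$ and by $\Im \phi_\sigma(\pi)$, where $\phi_\sigma \in \Phi$ restricts to $\sigma$. Taking the product over $\sigma$ and using $\prod_\sigma \Im \phi_\sigma(\pi) = N_\Phi(\pi-\bar\pi)/(2i)^g$, together with $\prod_\sigma \sigma(u) = N_{F/\QQ}(u) \in \{\pm 1\}$, shows that the global parity of the required pattern is controlled by $N_\Phi(\pi - \bar\pi)$.

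To finish I will split into cases using Lemma~\ref{lem:type-norm-diff-1}. When $K/F$ is ramified at some finite prime, $N_\Phi(\pi - \bar\pi) \neq \pm 1$, and I will show that enough flexibility is available in the choice of $\mu_0$ and of $u \in \sO_F^\times$ to realize any required pattern, so a principal polarization always exists. When $K/F$ is unramified at all finite primes, $g$ is forced to be even and $N_\Phi(\pi - \bar\pi) = \pm 1$; here $K$ is contained in the narrow Hilbert class field of $F$, and a class-field-theoretic analysis compares the image of the signature map $\sO_F^\times \to \{\pm 1\}^g$ (whose cokernel is $\Gal(H^+(F)/H(F))$) with the sign character cut out by the quadratic extension $K/F$ inside $H^+(F)$. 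The upshot is that the prescribed pattern is realizable precisely when $N_\Phi(\pi-\bar\pi) = +1$. This last matching is the main obstacle, since it demands simultaneously keeping track of signs coming from the $p$-adic CM type $\Phi$, from the chosen generator of $\Diff_{F/\QQ}\sO_K$, and from the narrow class group of $F$.
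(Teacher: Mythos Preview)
Your plan is sound but takes a substantially longer route than the paper.  The paper's proof is essentially a two-line reduction: it invokes \cite[Cor.~11.4]{howe1995principally} as a black box---which says that a simple ordinary isogeny class over $\FF_q$ fails to contain a principally polarized member if and only if $K/F$ is unramified at all finite primes, $N_\Phi(\pi-\bar\pi)$ is negative, and no prime of $F$ dividing $(\pi-\bar\pi)$ is inert in $K$---and then uses Lemma~\ref{lem:type-norm-diff-1} together with $\sO_K=\sO_F[\pi]$ to collapse those three conditions to the single equation $N_\Phi(\pi-\bar\pi)=-1$.  What you propose is to re-derive Howe's obstruction from scratch via the Deligne--Howe dictionary, specialised to the case $\End A=\sO_K$ with class number~$1$.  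Your version is more self-contained and makes the role of the narrow class field visible; the paper's version is much shorter because the hard sign-matching you flag as ``the main obstacle'' is exactly what Howe already packaged into his Corollary~11.4.

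One point in your sketch needs more care than you give it.  In the ramified case you assert that ``enough flexibility is available in the choice of $\mu_0$ and of $u\in\sO_F^\times$ to realize any required pattern''.  But changing $\mu_0$ is the same as changing $u$ (both parametrise the same $\sO_F^\times$-coset), and the signature map $\sO_F^\times\to\{\pm1\}^g$ is \emph{not} surjective for a general totally real $F$.  What rescues you is the class-number-$1$ hypothesis on $K$: since $H_F^+K\subseteq H_K^+=H_K=K$ one has $H_F^+\subseteq K$, and if $K/F$ is ramified at a finite prime then $H_F^+\neq K$, forcing $H_F^+=F$; Lemma~\ref{lem:unit-index-narrow-index} then gives $U_F^+=U_F^2$, which is precisely surjectivity of the signature map.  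This is the same class-field input that drives the proof of Lemma~\ref{lem:norm-surjective-on-+}, and you should make it explicit rather than leave it as ``enough flexibility''.
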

\begin{proof}
  Suppose $A$ does not admit a principal polarization.
  Then by
  \cite[Cor.~11.4]{howe1995principally} we have that
  $K/F$ is unramified at all finite primes and $N_{\Phi}(\pi - \overline{\pi}) < 0$.
  Because $\pi$ is a Weil generator for $K$, it follows that $\sO_K = \sO_{F}[\pi]$ by \cite[Lem.~3.13]{scholl2019super}.
  So by Lemma~\ref{lem:type-norm-diff-1}, we have $N_\Phi(\pi - \overline{\pi}) = -1$.

  Conversely, suppose that $N_\Phi(\pi - \overline{\pi}) = -1$.
  By Lemma ~\ref{lem:type-norm-diff-1} it follows that $K/F$ is unramified at all finite primes, and that $(\pi-\bar \pi) \in \sO_K^\times$.
  In particular there is no prime of $F$ dividing $(\pi-\bar \pi)$ which is inert in $K/F$.
  It now follows from \cite[Cor.~11.4]{howe1995principally} that $A$ does not admit a principal polarization.
\end{proof}
\begin{theorem}\label{thm:siav-pp-criteria}
  The abelian variety $A$ does not admit a principal polarization if and only if $\Norm_{K/\QQ}(\pi - \overline{\pi}) = 1$ and the middle coefficient $a_g$ of the Weil polynomial is $-1 \bmod{q}$ if $q > 2$ and $-1 \bmod{4}$ if $q = 2$.
\end{theorem}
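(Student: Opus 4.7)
The plan is to chain Lemma~\ref{lem:pp-iff-type-norm-m1} with two congruences that together translate the sign of $N_\Phi(\pi-\overline{\pi})$ into a condition on the middle coefficient $a_g$ of the Weil polynomial. By Lemma~\ref{lem:pp-iff-type-norm-m1}, $A$ has no principal polarization precisely when $N_\Phi(\pi-\overline{\pi})=-1$; by Lemma~\ref{lem:type-norm-diff-1} this automatically forces $\Norm_{K/\QQ}(\pi-\overline{\pi})=1$, $g$ even, and $N_\Phi(\pi-\overline{\pi})\in\{\pm 1\}$. The task therefore reduces to showing that, under $\Norm_{K/\QQ}(\pi-\overline{\pi})=1$, the sign of $N_\Phi(\pi-\overline{\pi})$ is detected by $a_g$ modulo $q$ when $q>2$, and modulo $4$ when $q=2$.

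The first congruence I would prove is $a_g\equiv\Norm_{F/\QQ}(\pi+\overline{\pi})\pmod q$. Setting $\beta_\phi=\phi(\pi+\overline{\pi})$ and expanding
\[
h(x)=\prod_{\phi\in\Phi}\bigl(x^2-\beta_\phi x+q\bigr)=\sum_{T\subseteq\Phi}q^{|T|}x^{g-|T|}\prod_{\phi\notin T}(x-\beta_\phi),
\]
extracting the coefficient of $x^g$ yields $a_g=\sum_{|T|\le g/2}q^{|T|}\,e_{g-2|T|}(\beta_\phi:\phi\notin T)$. The $|T|=0$ piece is $\prod_\phi\beta_\phi=\Norm_{F/\QQ}(\pi+\overline{\pi})$, and all pieces with $|T|\ge 1$ are divisible by $q$. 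When $q=2$ I would sharpen this to $\bmod 4$: a double-counting argument (each $(g-2)$-subset of $\Phi$ is enumerated exactly twice in the nested sum, once for each element of its complement) gives $\sum_{\phi_0}e_{g-2}(\beta_\phi:\phi\neq\phi_0)=2\,e_{g-2}(\beta_1,\dots,\beta_g)$, so the $|T|=1$ contribution is $2q\cdot e_{g-2}\equiv 0\pmod 4$, while $|T|\ge 2$ carries $q^{|T|}\equiv 0\pmod 4$.

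The second congruence I would prove is $\Norm_{F/\QQ}(\pi+\overline{\pi})\equiv N_\Phi(\pi-\overline{\pi})\pmod q$, refined to $\bmod 4$ when $q=2$. Setting $u_\phi=\phi(\pi)$ so that $\phi(\overline{\pi})=q/u_\phi$, a direct binomial expansion gives
\[
\prod_{\phi}(u_\phi+q/u_\phi)-\prod_{\phi}(u_\phi-q/u_\phi)=2\sum_{\substack{T\subseteq\Phi\\ |T|\text{ odd}}}\prod_{\phi\notin T}u_\phi\prod_{\phi\in T}(q/u_\phi).
\]
Each summand rewrites as $q^{|T|}\prod_{\phi\notin T}u_\phi/\prod_{\phi\in T}u_\phi$, and because $A$ is simple and ordinary, $\nu(u_\phi)=v_p(q)$ for every $\phi\in\Phi$, so each summand has $p$-adic valuation $(g-|T|)v_p(q)\ge v_p(q)$ (using $|T|\le g-1$ as $|T|$ is odd and $g$ is even). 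Hence the inner sum lies in $q\,\overline{\ZZ}_p$; the outer factor of $2$ adds $v_p(2)$ to the valuation, landing in $2q\,\overline{\ZZ}_p$ when $p=2$. Intersecting with $\ZZ$ then gives divisibility of the left-hand side by $q$ when $q$ is odd, and by $2q=4$ when $q=2$.

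Chaining the two congruences yields $a_g\equiv N_\Phi(\pi-\overline{\pi})\pmod q$ when $q>2$ and $\pmod 4$ when $q=2$. Since $N_\Phi(\pi-\overline{\pi})\in\{\pm 1\}$ and $-1\not\equiv+1$ modulo $q$ (for $q>2$) or modulo $4$, the condition $a_g\equiv -1$ is equivalent to $N_\Phi(\pi-\overline{\pi})=-1$, and Lemma~\ref{lem:pp-iff-type-norm-m1} concludes. The trickiest step is the $p$-adic accounting in the second congruence, especially in the case $q=2$, where the factor of $2$ outside the sum must combine with the ordinariness-driven valuation bound to force the sharper divisibility by $4$.
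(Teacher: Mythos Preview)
Your argument is correct and follows the same high-level structure as the paper: both reduce via Lemmas~\ref{lem:type-norm-diff-1} and~\ref{lem:pp-iff-type-norm-m1} to establishing the congruence $a_g\equiv N_\Phi(\pi-\overline{\pi})$ modulo $q$ (or modulo $4$ when $q=2$), and then observe that this modulus separates $\pm 1$. The difference lies in how the congruence itself is obtained. The paper simply invokes \cite[Prop.~11.5]{howe1995principally} for it, whereas you give a self-contained derivation by interpolating the quantity $\Norm_{F/\QQ}(\pi+\overline{\pi})$: first a combinatorial expansion of $h(x)=\prod_\phi\bigl(x(x-\beta_\phi)+q\bigr)$ yields $a_g\equiv\Norm_{F/\QQ}(\pi+\overline{\pi})$, and then a $p$-adic valuation count (using that $\nu(\phi(\pi))=v_p(q)$ for $\phi\in\Phi$ by ordinariness) yields $\Norm_{F/\QQ}(\pi+\overline{\pi})\equiv N_\Phi(\pi-\overline{\pi})$. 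Your route buys independence from Howe's paper and makes the mechanism behind the mod-$4$ refinement at $q=2$ transparent (the double-counting of $(g-2)$-subsets and the extra factor of $2$ in the odd-$|T|$ expansion); the paper's route is of course shorter. Two small points worth tightening: your displayed formula for $a_g$ silently drops a factor $(-1)^g$, which is harmless since you have already recorded $g$ even; and your phrase ``when $q$ is odd'' should read ``when $q>2$'', although your valuation bound already gives $2q$-divisibility and hence $q$-divisibility for even prime powers $q>2$ as well.
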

\begin{proof}
  By Lemmas~\ref{lem:type-norm-diff-1} and \ref{lem:pp-iff-type-norm-m1}, it is enough to show that if $K/\QQ$ is unramified at all finite primes, then $N_\Phi(\pi - \overline{\pi}) = -1$ if and only if $a_g$ satisfies the congruence condition in the statement.
  By \cite[Prop.~11.5]{howe1995principally}, if $K/F$ is unramified at all finite primes then $N_\Phi(\pi - \overline{\pi}) \equiv a_g \bmod{q}$ if $q > 2$ and $N_\Phi(\pi - \overline{\pi}) \equiv a_g \bmod{4}$ if $q = 2$.
  As these moduli are enough to distinguish $\pm 1$, the result follows.
\end{proof}

\begin{example}
  Let $\pi$ denote a root of $h(x) = x^8 + x^7 - 3x^6 - x^5 + 7x^4 - 2x^3 - 12x^2 + 8x + 16$.
  Then $\pi$ is a Weil generator for the CM field $K = \QQ(\pi)$ of degree $8$.
  So $\pi$ corresponds to a super-isolated abelian fourfold $A$ over $\FF_2$.
  One can check that $\Norm_{K/\QQ}(\pi - \overline{\pi}) = 1$.
  Also, the middle coefficient of $h(x)$ is $7 \equiv -1 \bmod{4}$, so $A$ does not admit a principal polarization.
\end{example}

\begin{example}
  We will show that the two conditions in Theorem~\ref{thm:siav-pp-criteria} are independent.
  Let $\alpha_1$ denote a root of $h_1 = x^2 - x + 3$ and $\alpha_2$ a root of $h_2 = x^4 - 5x^2 + 9$.
  Then $\alpha_1$ and $\alpha_2$ are Weil generators for the CM fields $K_i = \QQ(\alpha_i)$.
  Let $q_i = \alpha_i\overline{\alpha}_i$.
  One can compute that $\Norm_{K_1/\QQ}(\alpha_1-\overline{\alpha}_1) = 11$ and $\Norm_{K_2/\QQ}(\alpha_2-\overline{\alpha}_2) = 1$.
  The middle coefficients of $h_1$ and $h_2$ are $-1 \bmod{q_1}$ and $1 \bmod{q_2}$ respectively.
  Therefore $\alpha_1$ satisfies the second hypothesis but not the first of Theorem~\ref{thm:siav-pp-criteria}, while $\alpha_2$ satisfies the first but not the second.
\end{example}

\begin{remark}
  One can show that any abelian variety admits a principal polarization if and only if its quadratic twist does.
  But for super-isolated varieties, this follows almost immediately from Theorem~\ref{thm:siav-pp-criteria}.
  If $h(x)$ is the Weil polynomial of $A/\FF_q$, then $h(-x)$ is the Weil polynomial for its twist.
  Then because $\Norm_{K/\QQ}(\pi - \overline{\pi}) = \Norm_{K/\QQ}((-\pi) - (-\overline{\pi}))$ and $h(x)$ and $h(-x)$ share the same middle coefficient, the criterion in Theorem~\ref{thm:siav-pp-criteria} holds for one if and only if it holds for the other.
\end{remark}

Given any super-isolated abelian variety $A$, we can always construct one with a principal polarization using Zarhin's trick and Theorem~\ref{thm:powers}.

\begin{proposition}
  Let $A$ be a super-isolated abelian variety.
  Then $A^8$ is principally polarized.
\end{proposition}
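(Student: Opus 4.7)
The plan is to combine Zarhin's trick with the fact that super-isolatedness is preserved under taking powers (Theorem~\ref{thm:powers}). Recall Zarhin's trick: for \emph{any} abelian variety $B$, the fourth power of $B \times B^\vee$ admits a principal polarization. Applied to $B = A$, this produces a principally polarized variety $(A \times A^\vee)^4$ in a specific isogeny class.

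The key observation is that $A$ and its dual $A^\vee$ are always isogenous (any polarization of $A$ provides an isogeny $A \to A^\vee$). Therefore $(A \times A^\vee)^4$ is isogenous to $A^8$.

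Now I would invoke Theorem~\ref{thm:powers}: since $A$ is super-isolated, $A^8$ is also super-isolated. Hence the isogeny class of $A^8$ contains only one isomorphism class, which forces
\[
  A^8 \cong (A \times A^\vee)^4.
\]
Since the right-hand side is principally polarized by Zarhin's trick, so is $A^8$.

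The only mild subtlety is ensuring that we may apply Theorem~\ref{thm:powers}, which is stated for \goodenough{} abelian varieties; however, this is essentially the natural hypothesis in this section, and if one wants the cleanest statement one can alternatively avoid the super-isolated $n$-th power machinery entirely and simply observe that a principally polarized representative of the isogeny class of $A^8$ coincides with $A^8$ itself by the super-isolated hypothesis, using that in a super-isolated isogeny class every isogenous variety is isomorphic. No serious obstacle arises; the proof is essentially a one-line application of Zarhin's trick once we have noted $A \sim A^\vee$.
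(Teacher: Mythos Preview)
Your overall strategy (Zarhin's trick plus the fact that $A\sim A^\vee$) is the right one, but you take a detour through Theorem~\ref{thm:powers} that the paper avoids, and this detour is exactly where the ``mild subtlety'' you flag becomes a genuine issue: the proposition does \emph{not} assume $A$ is \goodenough{}, so invoking Theorem~\ref{thm:powers} to conclude that $A^8$ is super-isolated is not justified. Your suggested alternative still implicitly relies on $A^8$ being super-isolated, so it does not escape the problem.

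The paper's argument is shorter and sidesteps this entirely. Rather than arguing that $A^8$ is super-isolated, it applies the super-isolated hypothesis directly to $A$: since $A^\vee$ is isogenous to $A$ and $A$ is super-isolated, we have $A\cong A^\vee$. This immediately yields an explicit isomorphism $\varphi:A^8\to (A\times A^\vee)^4$, and pulling back the Zarhin polarization $\mu$ via $\varphi$ gives a principal polarization $\varphi^\vee\circ\mu\circ\varphi$ on $A^8$. No appeal to Theorem~\ref{thm:powers}, and no \goodenough{} hypothesis, is needed.
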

\begin{proof}
  Since $A$ is super-isolated it is isomorphic to its dual $A^\vee$.
  Hence there is an isomorphism $\varphi:A^8 \to (A\times A^\vee)^4$.
  By Zahrin's trick (see \cite{zarhin1974remark}), the product $(A\times A^\vee)^4$ admits a principal polarization $\mu$.
  Then $\varphi^*\mu = \varphi^\vee \circ \mu \circ \varphi$ is a principal polarization of $A^8$, see for example \cite[p.143]{mumford2008}.
\end{proof}

\subsection{Uniqueness of principal polarizations}
\label{sec:pol-unique}

In this section, we prove that if a super-isolated abelian variety admits a principal polarization, then such a polarization is unique up to polarized isomorphism.

First, we set the following notation.
For a number field $L$, we let $U_L$ denote the group of units of $\sO_L$, and the group of totally positive units is $U_L^+$.
The Hilbert class field of $L$ is denoted by $H_L$, and the narrow Hilbert class field is $H_L^+$.
Recall that $H_L$ is the maximal unramified abelian extension of $L$ and that $H_L^+$ is the maximal abelian extension of $L$ unramified outside of the infinite primes\footnote{
A real archimedean prime ramifies in an extension if it extends to a non-real embedding \cite[Ch.~5.C, p.~94]{cox2011primes}.} (see \cite[Ch.~5.C, Thm.~5.18]{cox2011primes} and \cite[Ch.~8, p.~167]{marcus2018number}).
Moreover, the Galois group $\Gal(H_L/L)$ is isomorphic to the class group of $L$, see \cite[Ch.~5.C, Thm.~5.23]{cox2011primes}.

\begin{lemma}\label{lem:unit-index-narrow-index}
  If $F$ is a totally real field, then
  \[
    [U_F^+ : U_F^2] = [H_F^+ : H_F].
  \]
\end{lemma}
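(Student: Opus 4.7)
The plan is to compute each side independently in terms of the number of real places $r = [F:\QQ]$ and the index $[U_F:U_F^+]$, and then check they agree.

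First I would handle the left-hand side. By Dirichlet's unit theorem, $U_F \cong \{\pm 1\} \times \ZZ^{r-1}$, so squaring gives $[U_F : U_F^2] = 2^r$. Splitting this index through the intermediate group $U_F^+$ yields
\[
[U_F^+ : U_F^2] \;=\; \frac{[U_F : U_F^2]}{[U_F : U_F^+]} \;=\; \frac{2^r}{[U_F : U_F^+]}.
\]
Note $U_F^2 \subseteq U_F^+$ since squares in a totally real field are totally positive, so the division makes sense.

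Next I would handle the right-hand side via class field theory. Since $H_F \subseteq H_F^+$ correspond under the Artin map to $\mathrm{Cl}(F)$ and $\mathrm{Cl}^+(F)$ respectively, I have
\[
[H_F^+ : H_F] \;=\; |\mathrm{Cl}^+(F)| \,/\, |\mathrm{Cl}(F)|.
\]
Writing $\mathrm{Cl}^+(F) = I_F/P_F^+$ and $\mathrm{Cl}(F) = I_F/P_F$, the natural surjection has kernel $P_F/P_F^+$, so the task becomes computing $|P_F/P_F^+|$. Using the sign map $\mathrm{sgn}: F^\times \to \{\pm 1\}^r$ at the $r$ real places, which is surjective (by weak approximation) with kernel $F^{\times,+}$, I would identify
\[
P_F/P_F^+ \;\cong\; F^\times/(U_F \cdot F^{\times,+}) \;\cong\; \{\pm 1\}^r / \mathrm{sgn}(U_F),
\]
and note $\mathrm{sgn}(U_F) \cong U_F/U_F^+$. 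This gives $[H_F^+ : H_F] = 2^r / [U_F : U_F^+]$, matching the expression for $[U_F^+ : U_F^2]$ obtained above.

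There is no real obstacle here; the only bookkeeping point is verifying that the sign map on $F^\times$ is surjective (which follows from the strong approximation/density of $F$ in $\prod_{v\mid\infty}\RR$) and that the two quotients above are identified correctly, so that the image of $U_F$ in $\{\pm 1\}^r$ is exactly $U_F/U_F^+$. Everything else is a direct index computation.
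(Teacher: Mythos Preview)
Your proof is correct and follows essentially the same route as the paper: both arguments reduce the claim to the identity $2^r = [U_F:U_F^+]\cdot[H_F^+:H_F]$ combined with $[U_F:U_F^2]=2^r$ from the structure of $U_F$. The only difference is that the paper cites \cite[Thm.~3.1, p.~242]{janusz1996algebraic} for the first identity, whereas you derive it yourself via the sign map and the kernel of $\mathrm{Cl}^+(F)\twoheadrightarrow\mathrm{Cl}(F)$; this makes your argument more self-contained but is not a genuinely different method.
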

\begin{proof}
  If $r$ is the number of real embeddings of $F$, then $2^r = [U_F : U_F^+][H_F^+ : H_F]$ by \cite[Thm.~3.1, p.~242]{janusz1996algebraic}.
  Since $F$ is totally real, $2^r = [U_F : U_F^2]$ as $U_F \cong \ZZ/2\ZZ \times \ZZ^{r-1}$.
\end{proof}

\begin{lemma}\label{lem:norm-surjective-on-+}
  Let $K$ be a CM field with class number $1$, and let $F$ be the maximal totally real subfield of $K$.
  Then $\Norm_{K/F}(U_K) = U_F^+$.
\end{lemma}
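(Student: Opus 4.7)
The easy direction $\Norm_{K/F}(U_K) \subseteq U_F^+$ is immediate: for any $v \in U_K$, $\Norm_{K/F}(v) = v\bar{v}$ equals $|\tilde\sigma(v)|^2 > 0$ at every real embedding $\sigma : F \to \RR$ extended to a complex embedding of $K$. My plan for the reverse inclusion is to exploit Lemma~\ref{lem:unit-index-narrow-index} via a dichotomy on $[U_F^+:U_F^2]$: either it equals $1$ and we win trivially, or $K/F$ must be unramified at every finite prime, in which case I invoke the Hasse norm theorem together with an ideal-theoretic modification enabled by $h_K=1$.

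First I observe that $H_F^+ \subseteq K$. Indeed, $K \cdot H_F^+$ is an abelian extension of $K$; it is unramified at the finite primes (since $H_F^+/F$ is) and trivially unramified at the infinite primes of $K$ (which are all complex), so $K \cdot H_F^+ \subseteq H_K = K$ by $h_K=1$. The same reasoning gives $H_F \subseteq K$, and since $H_F$ is totally real while $F$ is the maximal totally real subfield of $K$, we get $H_F = F$. Therefore $[H_F^+ : H_F] = [H_F^+ : F] \in \{1,2\}$, and by Lemma~\ref{lem:unit-index-narrow-index} this equals $[U_F^+ : U_F^2]$.

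If $[U_F^+ : U_F^2] = 1$, then every $u \in U_F^+$ can be written as $v^2$ for some $v \in U_F \subseteq U_K$, whence $\Norm_{K/F}(v) = v^2 = u$. Otherwise $H_F^+ = K$, so $K/F$ is unramified at every finite prime. In this case I invoke the Hasse norm theorem for the cyclic extension $K/F$: for $u \in U_F^+$, the local norm conditions are satisfied at every infinite place (since $u > 0$ and $\Norm_{\CC/\RR}(\CC^\times) = \RR_{>0}$) and at every finite place (unramified, so every unit is automatically a local norm). Hence $u = \Norm_{K/F}(x)$ for some $x \in K^\times$.

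To upgrade $x$ to a unit, consider the fractional ideal $(x) \subset K$. Since $\Norm_{K/F}((x)) = (u) = \sO_F$, the ideal $(x)$ lies in the kernel of the norm map on fractional ideals; a prime-by-prime check shows this kernel is generated by the ideals $\mathfrak{P}\bar{\mathfrak{P}}^{-1}$ as $\mathfrak{p} = \mathfrak{P}\bar{\mathfrak{P}}$ ranges over the split primes of $F$ (the inert and ramified contributions are forced to vanish by the condition $\sum_{\mathfrak{P}\mid\mathfrak{p}} f_{\mathfrak{P}} n_{\mathfrak{P}} = 0$). Using $h_K = 1$, each $\mathfrak{P}$ is principal, say $\mathfrak{P} = (\pi)$, so that $\mathfrak{P}\bar{\mathfrak{P}}^{-1} = (\pi/\bar\pi)$ and $\Norm_{K/F}(\pi/\bar\pi) = 1$. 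Thus $(x) = (z)$ for some $z \in K^\times$ with $\Norm_{K/F}(z) = 1$, and writing $x = vz$ with $v \in U_K$ gives $\Norm_{K/F}(v) = u$. The main delicate step is this final ideal-theoretic surgery, where $h_K = 1$ is used crucially to exhibit explicit generators for each prime.
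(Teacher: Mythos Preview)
Your proof is correct and takes a genuinely different route from the paper in the unramified case. Both arguments share the same setup: show $H_F^+\subseteq K$, split into the cases $U_F^+=U_F^2$ (trivial) and $H_F^+=K$ (so $K/F$ is unramified at all finite primes with $[U_F^+:U_F^2]=2$). From there the paper proceeds constructively: it writes $K=F(\sqrt{-\varepsilon})$ via Kummer theory, reduces $\varepsilon$ to a unit of $F$ using the even-valuation observation and $h_F=1$, and if $\varepsilon\in U_F^2$ (i.e.\ $K=F(i)$) invokes a somewhat delicate cyclotomic-tower argument involving the prime above $2$ to manufacture a unit whose norm is not a square. You instead go through the Hasse norm theorem for the cyclic extension $K/F$: a totally positive unit is a local norm at the archimedean places by positivity and at the finite places because units are norms in unramified local extensions, hence a global norm from some $x\in K^\times$; then the prime-by-prime analysis of $\ker(\Norm_{K/F}:I_K\to I_F)$ together with $h_K=1$ lets you replace $x$ by a unit. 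Your argument is shorter and more uniform (no $K=F(i)$ case split), at the cost of invoking the Hasse norm theorem; the paper's argument is more elementary and explicit but longer. One small remark: your aside that ``the inert and ramified contributions are forced to vanish'' is accurate here precisely because there \emph{are} no ramified finite primes in this branch of the dichotomy---otherwise a ramified prime $\mathfrak{P}$ with $\Norm(\mathfrak{P})=\mathfrak{p}$ would allow odd exponents and spoil the $\Norm_{K/F}(z)=1$ conclusion.
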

\begin{proof}
  By \cite[Ch.~8, Ex.~27, p.~176]{marcus2018number}, $H_F^+K \subseteq H_K^+$.
  Since $K$ has no real embeddings because it is a CM field, we have $H_K^+ = H_K$ because the infinite primes of $K$ are unramified in every extension.
  Moreover, the assumption that $K$ has class number $1$ means that $H_K = K$.
  Therefore we have that $F \subseteq H_F^+ \subseteq K$, and either $H_F^+ = F$ or $H_F^+ = K$.
  By Lemma~\ref{lem:unit-index-narrow-index}, $U_F^2$ has index $1$ or $2$ in $U_F^+$ depending on which equality holds.

  If $K/F$ is ramified at a finite prime, then $H_F^+ = F$.
  Hence by Lemma~\ref{lem:unit-index-narrow-index} we have $U_F^+ = U_F^2$, and the conclusion follows from the fact that 
  \[U_F^2 \subseteq \Norm_{K/F}(U_K) \subseteq U_F^+.\]

  It remains to consider the case where $K/F$ is unramified.
  Here we have that $H_F^+ = K$, so $[U_F^+ : U_F^2] = 2$.
  To prove the claim, we will show that there is a unit $u \in U_K$ such that $\Norm_{K/F}(u) \notin U_F^2$.

  By Kummer theory, $K = F(\sqrt{-\varepsilon})$ for some totally positive element $\varepsilon \in F$.
  We claim that for every prime $\mathfrak{p}$ of $F$, the valuation $v_{\mathfrak{p}}(\epsilon)$ is even.
  To see this, let $\mathfrak{P}$ be a prime of $K$ lying over $\mathfrak{p}$.
  Since $\mathfrak{p}$ is unramified in $K/F$, then we have $v_{\mathfrak{p}}(\epsilon) = v_{\mathfrak{P}}(\epsilon)$.
  The latter is even because $v_{\mathfrak{P}}(\epsilon) = 2v_{\mathfrak{P}}(\sqrt{-\epsilon})$.

  By above, we can write $\varepsilon\sO_F = \mathfrak{p}_1^{2e_1} \cdots \mathfrak{p}_k^{2e_k}$ for primes $\mathfrak{p}_1,\dots,\mathfrak{p}_k$ of $F$.
  Because $K$ has class number $1$, so does $F$, see \cite[Thm.~4.10]{washington1997introduction}.
  So there are generators $\beta_i$ for each $\mathfrak{p}_i$, and we may write $\varepsilon = \varepsilon'\beta_1^{2e_1}\cdots\beta_k^{2e_k}$ for some $\varepsilon' \in U_F$.
  The extension $F(\sqrt{-\varepsilon})/F$ depends only on the residue of $\varepsilon$ in $F^\times/(F^\times)^2$.
  Therefore we may assume that $\varepsilon = \varepsilon'$, i.e. that $\varepsilon \in U_F$.

  Note that $\Norm_{K/F}(\sqrt{-\varepsilon}) = \varepsilon$.
  If $\varepsilon \notin U_F^2$, then we are done.
  Otherwise, $F(\sqrt{-\varepsilon}) = F(\sqrt{-1})$, so we may assume that $\varepsilon = 1$.
  In particular, it remains to consider the case where $K = F(i)$.

  Let $\zeta$ be a primitive $2^n$th root of unity in $K$ such that $n$ is maximal (by above we know that $n \geq 2$).
  Then we have a tower of the form
  \[
  \begin{tikzcd}
    & K \arrow[dl,-] \arrow[dr,-]
    \\
    \QQ(\zeta) \arrow[dr,-] & & F \arrow[dl,-]
    \\
    & \QQ(\zeta + \overline{\zeta}).
  \end{tikzcd}
  \]
  Recall that since $\zeta$ is a primitive $2^n$th root of unity, the rational prime $2$ is totally ramified in $\QQ(\zeta)$.
  Moreover, $2\sO_{\QQ(\zeta)} = \mathfrak{b}^{2^{n-1}}$ where $\mathfrak{b}$ is the prime ideal generated by $b=1-\zeta$ \cite[Ch.~2, Ex.~34, p.~34]{marcus2018number}.
  In particular $\mathfrak{b}=\overline{\mathfrak{b}}$.
  Put $\mathfrak{a}=\mathfrak{b}\cap \QQ(\zeta + \overline{\zeta})$, which is the unique prime ideal of $\QQ(\zeta + \overline{\zeta})$ above $2$.
  Let $a = b\overline{b}$.
  Observe that $a \in \QQ(\zeta + \overline{\zeta})$ and that $a$ is a generator of $\mathfrak{a}\sO_{\QQ(\zeta)} = \mathfrak{b}^2$.
  Therefore $v_{\mathfrak{b}}(a) = 2v_{\mathfrak{a}}(a)$, and $v_{\mathfrak{b}}(a) = 2$ by construction.
  Hence $a$ is a generator of $\mathfrak{a}$.
  As mentioned above, the ramification index $e(\mathfrak{b}/\mathfrak{a})=2$.
  So, since $K/F$ is unramified, we deduce that there exists an ideal $\mathfrak{c}$ of $F$ such that $\mathfrak{c}^2=\mathfrak{a}\sO_F$.
  Moreover since $F$ has class number $1$, the ideal $\mathfrak{c}$ admits a generator $c\in F$.
  Then $c^2 = au$ for some $u \in U_F$.
  Moreover we have that
  \[ c^2\sO_K=\mathfrak{c}^2\sO_K=\mathfrak{a}\sO_K=\mathfrak{b}^2\sO_K=b^2\sO_K. \]
  Hence $c/b \in U_K$. Also, by construction,
  \[
    \Norm_{K/F}\left(\frac{c}{b}\right) = \frac{au}{a} = u.
  \]
  We conclude the proof by proving that $u \notin U_F^2$.
  Indeed, assume that $u=u_0^2$ for some $u_0\in U_F$. Then
  \[
    \left(\frac{b u_0}{c}\right)^2 = \frac{b^2}{a} = \frac{b\overline{b}(-\zeta)}{a} = -\zeta.
  \]
   This implies that $bu_0/c$ is a primitive $2^{n+1}$ root of unity in $K$, contradicting the maximality of $n$.
\end{proof}
In the next example we show that the assumption on the class number of $K$ is necessary for Lemma \ref{lem:norm-surjective-on-+} to hold.

\begin{example}\label{ex:cl_num_1_necessary}
  Let $K = \QQ[x]/(x^4 - x^3 + x^2 - 3x + 9)$, which has class number $2$.
  Then $K$ is a CM field with maximal totally real subfield $F = \QQ(\sqrt{21})$.
  A fundamental unit of $F$ is $\epsilon = (5 - \sqrt{21})/2$, which is totally positive.
  One can show that $U_K = U_F$.
  Therefore $\epsilon \notin \Norm_{K/F}(U_K) = \langle \epsilon^2 \rangle$, so $U_F^+/\Norm_{K/F}(U_K) \cong \ZZ/2\ZZ$.
\end{example}

\begin{theorem}\label{thm:unique-pol}
  Let $A$ be a simple super-isolated ordinary abelian variety over $\FF_q$ which admits a principal polarization.
  Then the polarization is unique up to polarized isomorphism.
\end{theorem}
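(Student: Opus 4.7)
The plan is to combine the general classification of principal polarizations of an ordinary simple abelian variety (in the style of Howe \cite{howe1995principally}, or the squarefree setting of \cite{marseglia2021squarefree}) with Lemma~\ref{lem:norm-surjective-on-+}. Under the equivalence of categories for ordinary simple abelian varieties, $A$ corresponds to a fractional ideal of $\sO_K$. Since $A$ is super-isolated, Theorem~\ref{thm:siav} gives $\End(A)=\sO_K$ and $K$ has class number $1$, so there is essentially only one underlying module, namely $\sO_K$ itself. A polarization is then encoded by the choice of a suitable element of $K$ whose imaginary part is positive with respect to the chosen CM type $\Phi$, and two polarizations yield polarized-isomorphic abelian varieties precisely when they differ by an element of the form $u \bar u$ for some unit $u \in U_K$, i.e.\ by a norm from $U_K$.

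With that setup fixed, I would proceed as follows. First, let $\mu_0$ be a fixed principal polarization of $A$, which exists by hypothesis. Every other principal polarization $\mu$ of $A$ is of the form $\mu = \varepsilon \mu_0$ for a totally $\Phi$-positive element $\varepsilon \in K$ that is a unit in $\sO_K$ (principality forces the scaling factor to be a unit, and the positivity condition on polarizations forces $\varepsilon$ to be totally positive in $F$, hence in $U_F^+$). Two such polarizations $\varepsilon_1 \mu_0$ and $\varepsilon_2 \mu_0$ are polarized-isomorphic if and only if $\varepsilon_1/\varepsilon_2 \in \Norm_{K/F}(U_K)$, because a polarized isomorphism of $(A,\varepsilon_1\mu_0)$ with $(A,\varepsilon_2\mu_0)$ is exactly an automorphism $u \in \Aut(A) = U_K$ satisfying $u\bar u \cdot \varepsilon_1 = \varepsilon_2$. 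Hence the set of principal polarizations of $A$ up to polarized isomorphism is a torsor under $U_F^+ / \Norm_{K/F}(U_K)$.

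Next, I would apply Lemma~\ref{lem:norm-surjective-on-+}: because $K$ has class number $1$ (Theorem~\ref{thm:siav}), the norm map $\Norm_{K/F}: U_K \to U_F^+$ is surjective. Therefore $U_F^+/\Norm_{K/F}(U_K)$ is trivial, so the torsor has exactly one element, and the principal polarization of $A$ is unique up to polarized isomorphism.

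The key technical input is Lemma~\ref{lem:norm-surjective-on-+}, which is already established; given it, the remaining work is essentially bookkeeping of how polarizations on an ordinary simple abelian variety with $\End(A)=\sO_K$ transform under $U_K$. The only point requiring care is to cite the correct version of the ordinary classification so that the torsor structure above is justified (one may invoke either \cite[Prop.~10.1]{howe1995principally} combined with the fact that $\End(A)=\sO_K$, or directly apply \cite[Alg.~3]{marseglia2021squarefree} restricted to the unique ideal class $[\sO_K]$); no further obstacle is anticipated.
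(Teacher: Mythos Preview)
Your proposal is correct and follows essentially the same approach as the paper: identify the set of principal polarizations up to polarized isomorphism with the quotient $U_F^+/\Norm_{K/F}(U_K)$, then apply Lemma~\ref{lem:norm-surjective-on-+} (using that $K$ has class number~$1$) to conclude this quotient is trivial. The only cosmetic difference is that the paper cites \cite[Thm.~5.4]{marseglia2021squarefree} directly for the torsor description rather than \cite[Prop.~10.1]{howe1995principally} or \cite[Alg.~3]{marseglia2021squarefree}.
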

\begin{proof}
  We have that $\End(A)=\sO_K$ for a CM-number field $K$ with class number~$1$.
  By \cite[Thm.~5.4.(a)]{marseglia2021squarefree},
  two principal polarizations of $A$ differ by a totally positive unit in $U_{F}$.
  By \cite[Thm.~5.4.(b)]{marseglia2021squarefree},
  two principal polarizations of $A$ are isomorphic if and only if they differ by an element of the form $v\overline{v}$ for a unit $v\in U_K$.
  In other words, the number of principal polarizations of $A$ up to isomorphism is given by the size of the quotient
  \[
    \frac{ U_{F}^+ }{ \Norm_{K/F}(U_K) },
  \]
  and this is trivial by Lemma~\ref{lem:norm-surjective-on-+}.
\end{proof}

\subsection{Products of principal polarizations}

Let $K$ be a CM field.
Recall that for a CM type $\Phi$ of $K$ we say that a totally imaginary element $\lambda \in K$ is $\Phi$-positive if $\Im(\varphi(\lambda))>0$ for every $\varphi \in \Phi$.
Also, for a fractional ideal $I$ of some order $R$ in $K$, we denote by $I^t$ its trace dual ideal, which is defined as
\[
  I^t = \lbrace z \in K : \Trace_{K/\QQ}(xI) \subseteq \ZZ \rbrace.
\]
Also, we define $\overline{I}$ as the image of $I$ by the CM involution of $K$.
\begin{corollary}
\label{cor:pol_square_free}
  Let $A$ be an ordinary squarefree super-isolated abelian variety, say $A=\prod_{1}^n A_i$ with $A_i$ simple.
  Then $A$ admits a principal polarization if and only if each $A_i$ does.
  If this is the case, the principal polarization is unique up to polarized isomorphism.
\end{corollary}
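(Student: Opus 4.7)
The plan is to reduce the product case to the simple case already handled by Theorem~\ref{thm:unique-pol}, exploiting the fact that the simple factors are pairwise non-isogenous (since $A$ is squarefree) and hence contribute no cross-morphisms.

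First I would record the structural starting point: because $A$ is ordinary and squarefree it is \goodenough{}, and since it is super-isolated Theorem~\ref{thm:product-siav} gives $\ZZ[\pi,\overline{\pi}] = \sO_K = \prod \sO_{K_i}$ with $\Pic(\sO_{K_i})$ trivial for every $i$. In particular each $A_i$ is itself a simple ordinary super-isolated abelian variety, so Theorems~\ref{thm:siav-pp-criteria} and \ref{thm:unique-pol} apply to each $A_i$.

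For the existence claim I would use that the simple factors $A_i$ are pairwise non-isogenous, which gives
\[
  \Hom_{\FF_q}(A, A^\vee) \;=\; \prod_{i=1}^n \Hom_{\FF_q}(A_i, A_i^\vee),
\]
exactly as in the proof of Lemma~\ref{lemma:super-iso_from_prod}. Under this splitting an isogeny $\lambda \colon A \to A^\vee$ is a polarization if and only if each component $\lambda_i \colon A_i \to A_i^\vee$ is a polarization (positivity is checked componentwise), and $\deg\lambda = \prod_i \deg\lambda_i$, so $\lambda$ is principal iff each $\lambda_i$ is. This yields both implications of the first statement: a principal polarization of $A$ decomposes into principal polarizations of the $A_i$, and conversely a tuple of principal polarizations of the $A_i$ assembles into one on $A$.

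For uniqueness I would use the same decomposition together with $\End_{\FF_q}(A) = \prod \End_{\FF_q}(A_i)$, which forces $\Aut_{\FF_q}(A) = \prod \Aut_{\FF_q}(A_i)$. Given two principal polarizations $\lambda = (\lambda_i)$ and $\mu = (\mu_i)$ of $A$, an isomorphism of polarized varieties is an automorphism $\varphi = (\varphi_i) \in \Aut(A)$ with $\varphi^\vee \circ \mu \circ \varphi = \lambda$, which under the product decomposition is equivalent to $\varphi_i^\vee \circ \mu_i \circ \varphi_i = \lambda_i$ for every $i$, i.e.\ to a componentwise isomorphism of polarized varieties. Since Theorem~\ref{thm:unique-pol} says each $A_i$ has at most one principal polarization up to polarized isomorphism, the same holds for $A$.

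There is no serious obstacle here: the only subtlety is confirming that the splitting $\Hom(A,A^\vee)=\prod\Hom(A_i,A_i^\vee)$ respects the duality and positivity conditions defining a polarization, but this is standard because the dual of a product of pairwise non-isogenous simple factors is the product of the duals, and ampleness on $A$ is equivalent to ampleness on each factor.
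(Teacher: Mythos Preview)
Your argument is correct, but it follows a different route from the paper's own proof. The paper works entirely inside the module-theoretic description of polarizations: invoking \cite[Thm.~5.4]{marseglia2021squarefree}, it identifies a principal polarization of $A$ with a totally imaginary, $\Phi$-positive element $\lambda\in K^\times$ satisfying $\lambda\sO_K=\overline{\sO}_K^t$, and then observes that each of these conditions decomposes over $K=\prod_i K_i$ and $\sO_K=\bigoplus_i\sO_{K_i}$, so that $\lambda=(\lambda_i)$ works if and only if each $\lambda_i$ does. Uniqueness is then read off from the product formula $U_F^+/\Norm_{K/F}(U_K)=\prod_i U_{F_i}^+/\Norm_{K_i/F_i}(U_{K_i})$, each factor being trivial by Lemma~\ref{lem:norm-surjective-on-+}. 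Your approach is instead purely geometric: you use the vanishing of cross-$\Hom$'s among the pairwise non-isogenous simple factors to split $\Hom(A,A^\vee)$, $\End(A)$, and the automorphism group, and check directly that the polarization condition and the degree are multiplicative over such a product. Both arguments ultimately reduce uniqueness to the simple case handled by Theorem~\ref{thm:unique-pol}. Your version has the advantage of not invoking the Deligne--Howe--Marseglia machinery at all for this corollary; the paper's version has the advantage of staying within the framework used in the rest of Section~\ref{sec:siav-pol} and making the count of polarizations explicit as a unit index.
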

\begin{proof}
  Let $h$ (resp.~$h_i$) be the Weil polynomial of $A$ (resp.~$A_i$).
  Put $K=\QQ[x]/h$ and $K_i=\QQ[x]/h_i$ for each $i$, so that $K = \prod_{i=1}^n K_i$.
  For each $i$ let $F_i$ be the maximal totally real subfield of $K_i$ and put $F=\prod_i F_i$.
  Since the abelian variety $A$ is ordinary, by \cite[Thm.~5.4]{marseglia2021squarefree} we have that $A$ is principally polarized if and only there exist a  $\lambda \in K^*$ which is totally imaginary, $\Phi$-positive, and such that
  \begin{equation}
  \label{eq:pols}
  \lambda\sO_K = \overline{\sO}_K^t.
  \end{equation}
  Since $\sO_K = \oplus_i \sO_{K_i}$ and $\overline{\sO}_K^t = \oplus_i \overline{\sO}_{K_i}^t$,
  if we write $\lambda=(\lambda_1,\ldots,\lambda_n)$ with $\lambda_i$ in $K_i$, then $\lambda$ is totally imaginary and $\Phi$-positive if and only if the same holds for each $\lambda_i$. Also, Equation \eqref{eq:pols} holds if and only if
  \[ \lambda_i\sO_{K_i} = \overline{\sO}_{K_i}^t \]
  for each $i$.
  The statement about the uniqueness follows from the equality
  \[ \frac{ U_{F}^+ }{ \Norm_{K/F}(U_K) } = \prod_{i=1}^n \frac{ U_{F_i}^+ }{ \Norm_{K_i/F_i}(U_{K_i}) }. \]
\end{proof}

\begin{remark}\label{remark:pol_power}
  Let $A$ be an ordinary simple super-isolated abelian variety admitting a principal polarization.
  Then also $A^n$ admits a principal polarization, but this is in general not unique, see \cite[Ex.~6.5, Ex.~6.6]{marseglia2019power}.
\end{remark}

\section{Jacobians}
\label{sec:jacobians}

\begin{proposition}
\label{prop:jac}
Let $C$ and $C'$ be smooth, projective and geometrically integral curves of genus $g>1$ defined over $\FF_q$ with the same zeta function.
Assume that $\Jac(C)$ is ordinary, \goodenough{}, and super-isolated.
Then the curves $C$ and $C'$ are isomorphic.
\end{proposition}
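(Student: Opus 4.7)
The plan is to reduce the statement to Torelli's theorem by establishing that $(\Jac(C), \Theta_C)$ and $(\Jac(C'), \Theta_{C'})$ are isomorphic as principally polarized abelian varieties, where $\Theta_C$ and $\Theta_{C'}$ denote the canonical principal polarizations.

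The first step is to note that having the same zeta function means $\Jac(C)$ and $\Jac(C')$ share the same Weil polynomial, and so they are isogenous over $\FF_q$. Because $\Jac(C)$ is super-isolated, its $\FF_q$-isogeny class contains a unique isomorphism class, so I obtain an isomorphism of abelian varieties $\varphi: \Jac(C') \to \Jac(C)$ that ignores polarizations.

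The next step is to upgrade $\varphi$ to a polarized isomorphism. I would pull back $\Theta_{C'}$ along $\varphi^{-1}$ (or equivalently push $\Theta_{C'}$ forward through $\varphi$) to obtain a second principal polarization $\Theta'$ on $\Jac(C)$. Now $\Jac(C)$ is \goodenough{}, hence in particular squarefree, and it is ordinary and super-isolated by hypothesis; moreover it does admit at least one principal polarization, namely $\Theta_C$, since it is a Jacobian. Corollary~\ref{cor:pol_square_free} therefore applies and guarantees that the principal polarization is unique up to polarized isomorphism. Consequently there is an automorphism $\psi$ of $\Jac(C)$ with $\psi^{*}\Theta_C = \Theta'$, and then $\varphi \circ \psi^{-1}$ is a polarized isomorphism $(\Jac(C'), \Theta_{C'}) \to (\Jac(C), \Theta_C)$.

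Finally, I would invoke Torelli's theorem over the finite field $\FF_q$ for curves of genus $g>1$, which says that two such curves whose polarized Jacobians are isomorphic as principally polarized abelian varieties are themselves isomorphic over the base field. This yields $C \cong C'$ over $\FF_q$. The main point to be careful about is the uniqueness step: one must verify that the hypotheses of Corollary~\ref{cor:pol_square_free} (ordinary, squarefree, super-isolated, existence of a principal polarization) are all in force for $\Jac(C)$, which they are because $\Jac(C)$ is a Jacobian that is assumed to be ordinary, \goodenough{}, and super-isolated. Once uniqueness of the principal polarization on $\Jac(C)$ is in hand, the remainder is a direct appeal to Torelli.
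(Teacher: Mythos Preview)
Your argument is correct and follows essentially the same route as the paper: pass from equal zeta functions to an isogeny, use super-isolatedness to upgrade this to an unpolarized isomorphism, invoke uniqueness of the principal polarization to obtain a polarized isomorphism, and finish with Torelli. The only difference is that you appeal to Corollary~\ref{cor:pol_square_free} rather than Theorem~\ref{thm:unique-pol}, which is in fact the more appropriate citation since the hypotheses do not assume $\Jac(C)$ is simple.
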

\begin{proof}
Observe that by assumption $\Jac(C')$ is isogenous to $\Jac(C)$, and hence isomorphic since $\Jac(C)$ is super-isolated.
Denote by $\theta$ and $\theta'$ the canonical principal polarizations of $\Jac(C)$ and $\Jac(C')$, respectively.
By Theorem \ref{thm:unique-pol} we deduce that $(\Jac(C),\theta)$ is isomorphic to $(\Jac(C'),\theta')$.
Therefore by Torelli's Theorem we deduce that $C\simeq C'$.
\end{proof}

The next example shows that given a Weil generator $\pi$ for a number field $K=\QQ(\pi)$ with non-trivial class group, we can have two non-isomorphic Jacobians as polarized abelian varieties which are isomorphic as unpolarized abelian varieties in the isogeny class determined by the minimal polynomial of $\pi$.
For more examples and a general method to construct such curves see \cite{Howe96constr_jac}.

\begin{example}
\label{ex:non_siav_jac}
Consider the hyperelliptic curves over $\FF_3$ defined by
\[ C_1: y^2 = 2x^5 + 2x^4 + x^3 + 2x^2 + 1 \text{ and } C_2: y^2 = 2x^5 + x^4 + x + 1. \]
Observe (or use {\tt Magma}\nocite{magma} to verify) that $C_1$ and $C_2$ are not isomorphic.
Their Jacobians lie in the same isogeny class, which is determined by the Weil polynomial
\[ h=x^4 - x^3 + x^2 - 3x + 9. \]
Let $K=\QQ[x]/(h)=\QQ(\pi)$, which is the same field as in Example \ref{ex:cl_num_1_necessary}.
Note that $\pi$ is a Weil generator for $K$ but that the isogeny class is not super-isolated because the class group of $K$ has order two.
Using \cite[Thm.~4.3]{marseglia2021squarefree} we deduce that there are two isomorphism classes of abelian varieties in the isogeny class, represented by say $A$ and $B$.
Using \cite[Thm.~5.4]{marseglia2021squarefree}, we compute that one of the isomorphism classes admits two non-isomorphic principal polarizations, say $(A,\theta_1)$ and $(A,\theta_2)$, while $B$ is not principally polarized.
Note that it is not surprising that $A$ has two non-isomorphic polarizations: indeed
by \cite[Thm.~5.4]{marseglia2021squarefree} the number of non-isomorphic polarizations equals the size of $U_F^+/\Norm_{K/F}(U_K)$ and in Example \ref{ex:cl_num_1_necessary} we showed that it is $2$.
Denote by $\theta'_1$ (resp.~$\theta'_2$) the canonical polarization of $\Jac(C_1)$ (resp.~$\Jac(C_2)$). We deduce that, after possibly relabelling $\theta_1$ and $\theta_2$, we have isomorphisms $(\Jac(C_1),\theta'_1)\simeq (A,\theta_1)$ and $(\Jac(C_2),\theta'_2)\simeq (A,\theta_2)$.
In particular we have $\Jac(C_1)\simeq \Jac(C_2) \simeq A$ as unpolarized abelian varieties.
\end{example}

\end{document}